\newtheorem{theorem}{Theorem}[section]
\newtheorem{corollary}[theorem]{Corollary}
\newtheorem{lemma}[theorem]{Lemma}
\newtheorem*{lemma*}{Lemma}
\newtheorem{definition}[theorem]{Definition}
\newtheorem{conjecture}[theorem]{Conjecture}
\newtheorem{claim}[theorem]{Claim}
\def\C{\Bbb C}
\def\N{\Bbb N}
\def\Z{\Bbb Z}
\DeclareMathOperator{\Cent}{Cent}
\DeclareMathOperator{\Mat}{Mat}
\DeclareMathOperator{\SL}{SL}
\DeclareMathOperator{\Span}{Span}
\DeclareMathOperator{\gcl}{gcl}
\DeclareMathOperator{\PSL}{PSL}
\DeclareMathOperator{\trace}{trace}
\DeclareMathOperator{\Ad}{Ad}
\DeclareMathOperator{\spec}{spec}
\DeclareMathOperator{\pr}{pr}
\title{Is being a higher rank lattice a first order property?}
\author{Nir Avni and Chen Meiri}
\date{\today}
\begin{document}

\maketitle

\begin{abstract} We show that there is a sentence $\varphi$ in the first order language of groups such that a finitely generated group $\Gamma$ satisfies $\varphi$ if and only if $\Gamma$ is isomorphic to a group of the form $\PSL_n(O)$, where $n \geq 3$ and $O$ is a ring of $S$-integers in a number field.
\end{abstract}

\section{Introduction}
This paper is a continuation of our work \cite{AM} on the model theory of higher rank lattices. Whereas \cite{AM} focuses on  individual higher rank lattices, our interest here is in the collection of all higher rank lattices. One motivation for this work is the following conjecture:

\begin{conjecture} \label{conj:fop} The property of being a higher rank arithmetic group is a first order property. That is, there is a first order sentence $\varphi$ in the language of groups such that, for any finitely generated group $\Delta$, the sentence $\varphi$ holds in $\Delta$ if and only if $\Delta$ is a higher rank arithmetic group.
\end{conjecture}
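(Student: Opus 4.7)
The plan is to bootstrap from the sentence $\varphi$ of the abstract, which characterizes the family $\PSL_n(\IR)$ with $n \geq 3$, to a single sentence characterizing all higher rank arithmetic groups. By Margulis arithmeticity, every irreducible lattice in a higher rank semisimple Lie group is arithmetic, so it suffices, for each absolutely almost simple $\Q$-group $\mathbf{G}$ of higher rank and each admissible $S$-arithmetic datum, to produce a first-order description of the associated $S$-arithmetic lattice, and then to combine these descriptions into one sentence. The main tool, as in \cite{AM}, would be bi-interpretability between the lattice $\Gamma$ and the pair $(\IR,\mathbf{G})$: one needs to interpret the ring of $S$-integers $\IR$ and the algebraic group $\mathbf{G}$ purely from the abstract group structure of $\Gamma$.

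I would proceed in three layers. First, replace $\PSL_n$ by the remaining classical isotropic series one at a time: $\Sp_{2n}$, split and quasi-split forms of $\SO_{p,q}$, inner and outer forms of $\mathrm{SU}$, and forms of $\SL_n$ over division algebras. For each such series the Steinberg-type commutator relations among root subgroups should allow one to interpret the coefficient ring $\IR$ (and the Galois data distinguishing the form) in direct analogy with the $\PSL_n$ case; Mostow--Margulis rigidity and Borel--Tits then pin down the algebraic group, and finite-index and center arguments refine commensurability to isomorphism. Second, handle the exceptional types $G_2$, $F_4$, $E_6$, $E_7$, $E_8$ individually, using the Chevalley commutator formulas specific to each root system. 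Third, unify these pieces into a single sentence by quantifying over potential ``root system data'' inside the group: the sentence should assert the existence of elements witnessing a higher-rank root subgroup pattern, and, relative to those witnesses, interpret a ring of $S$-integers and verify that $\Gamma$ is the resulting $S$-arithmetic lattice.

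The hardest part, and the reason the conjecture remains open, is uniformity of the interpretation across all types and ranks. One needs a single first-order scheme that detects a Steinberg-style generating set, interprets the unique compatible coefficient ring, and confirms that the full group is virtually generated by the corresponding root subgroups, all without referring externally to the rank or the type. Complicating factors include the failure or unknown status of the congruence subgroup property for certain inner forms of type $A_n$, which means the abstract group can admit more quotients than the arithmetic completion predicts; the handling of $S$-arithmetic groups with several non-archimedean factors, where the bi-interpretation should recover $\IR$ rather than a single local ring; and the need to rule out, uniformly, finitely generated groups that are elementarily equivalent to higher rank arithmetic groups without being isomorphic to any one of them. Extending the quasi-isometric and character-rigidity inputs of \cite{AM} uniformly over the full classification is, in my view, where the real obstruction lies.
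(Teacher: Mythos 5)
There is a basic mismatch you should be aware of: the statement you are addressing is stated in the paper as a \emph{conjecture}, and the paper does not prove it. What the paper actually proves is the special case for the single family $\PSL_d(O)$, $d\geq 3$, $O$ a ring of $S$-integers, and it does so not group-by-group but by showing that the entire \emph{sequence} $(\Gamma_n)_n$ of such groups is bi-interpretable with $(\mathbb{Z})_n$ in the uniform sense of Definition \ref{def:logic} (Theorem \ref{thm:main.bi-int}), after which the Nies--Khelif argument converts uniform bi-interpretability into a single first-order sentence characterizing membership in the family among finitely generated groups (Corollary \ref{cor:fop}). The authors explicitly defer even a ``large class (but not all)'' of higher rank arithmetic groups to a forthcoming paper, so the full conjecture is open.

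Your proposal does not close this gap, and you say so yourself: the third layer, where the case-by-case interpretations for the classical and exceptional series are to be ``combined into one sentence by quantifying over potential root system data,'' is precisely the content of the conjecture, not a step you can take for granted. A first-order sentence in the language of groups cannot express an infinite disjunction over Lie types, ranks, coefficient rings, and $S$-arithmetic data; what is needed is a single uniform definable scheme, and the paper's mechanism for producing one (uniform definability of congruence subgroups via Lemma \ref{lemma:cong_def}, a uniform interpretation of $(O_n)_n$ inside the root subgroup $\Upsilon_n$, and bi-interpretation of the whole sequence with $(\mathbb{Z})_n$ so that Khelif's lemma applies) is exactly the ingredient your plan leaves as an acknowledged obstruction. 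In addition, your reliance on ``bi-interpretability between the lattice and $(O,\mathbf{G})$'' for all types overstates what is known: the results of \cite{AM} cover non-uniform lattices and certain uniform ones of orthogonal type, and the difficulties you list (CSP for inner forms of type $A_n$, uniform lattices in general) are genuine open problems, not complications to be absorbed by the same argument. So the proposal is a reasonable research program consistent in spirit with the paper's approach, but it is not a proof of the statement, and no proof of the statement exists in the paper either.
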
 

In a forthcoming paper, we prove the conjecture with respect to a large class of higher rank arithmetic groups (but not all of them). In this note, we prove it for the collection of all groups of the form $\SL_n(O)$, where $n \geq 3$ and $O$ is the ring of $S$-integers in a number field $K$.

Our approach to Conjecture \ref{conj:fop} is via bi-interpretation with the integers. The main result of \cite{AM} is that, if $\Gamma$ is a higher rank lattice which is either non-uniform or uniform of orthogonal type and dimension at least 9, then $\Gamma$ is bi-interpretable with the ring $\mathbb{Z}$ of integers. The main result of this paper is:

\begin{theorem} \label{thm:main.bi-int} Let $(\Gamma_n)_n$ be an enumeration of all groups of the form $\PSL_d(O)$, where $d \geq 3$ and $O$ in the ring of $S$-integers in a number field $K$, where $S$ is a finite set of places of $K$ containing all the archimedean ones. Then the sequence $(\Gamma_n)_n$ is bi-interpretable with $\mathbb{Z}$.
\end{theorem}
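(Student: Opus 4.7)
The main result of \cite{AM} already provides, for each individual $\Gamma_n=\PSL_d(O)$, a bi-interpretation with $\Z$ (since these groups are non-uniform higher rank lattices). The new content of Theorem~\ref{thm:main.bi-int} is therefore \emph{uniformity}: a single interpretation scheme producing $\Z$ out of every $\Gamma_n$, and a single parametrized interpretation producing all the $\Gamma_n$ out of $\Z$. My plan has three steps, designed so that each can be carried out uniformly in $(d,O)$.

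First, inside $\Gamma_n=\PSL_d(O)$, I would recover the coefficient ring $O$ by singling out the root subgroups $U_{ij}$ (the images of the elementary unipotents $I+tE_{ij}$) as maximal abelian unipotent subgroups satisfying Steinberg-type commutator relations $[U_{ij},U_{jk}]=U_{ik}$ and $[U_{ij},U_{k\ell}]=1$ when $\{i,j\}\cap\{k,\ell\}=\emptyset$. The hypothesis $d\geq 3$ is crucial: the existence of a third index $k$ is what makes the commutator bracket recover ring multiplication $(s,t)\mapsto st$, while $U_{ij}$ itself, identified with the additive group of $O$, gives addition. The rank $d$ is readable from the nilpotency class of a maximal unipotent subgroup. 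This is essentially the scheme used in \cite{AM}, but rephrased so that the defining formulas reference neither $d$ nor $O$.

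Second, inside the interpreted copy of $O$, I would interpret $\Z$ uniformly, using the classical theorem that for any ring of $S$-integers in a number field there is a first-order definition of $\Z$ by formulas not depending on $K$ or $S$ (work of J.\,Robinson, Rumely, Shlapentokh, Poonen, and others). Composing with Step~1 yields a single interpretation $\alpha$ with $\alpha(\Gamma_n)\cong\Z$ for every $n$. In the reverse direction, I would fix a G\"odel-style enumeration of triples $(d,K,S)$ inside $\Z$: given such a triple, $O$ is definable as a localization of a quotient $\Z[x]/(f)$, and $\PSL_d(O)$ is then definable as a quotient of a definable subset of $\Mat_d(O)$, giving an interpretation $\beta$ of the whole sequence in $\Z$ with parameter $n$.

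To close the loop, one must exhibit definable isomorphisms between $\alpha\circ\beta$, $\beta\circ\alpha$ and the identity interpretations. The key point is that the copy of $O$ extracted in Step~1 is canonically identified with the $O$ used to construct $\Gamma_n$, and each element of $\PSL_d(O)$ is definably encoded by its tuple of root-subgroup entries. The main obstacle I anticipate is uniformity throughout: Step~1 requires a single dimension-free formula picking out the correct root-subgroup pattern in every $\PSL_d(O)$ with $d\geq 3$, which means extracting from the arguments of \cite{AM} a version that does not rely on already knowing $d$ or $O$; and the uniform definition of $\Z$ inside $O$ in Step~2 must be compatible with the output of Step~1. Step~3 and the final verification of bi-interpretation are comparatively mechanical once these uniformity issues are settled.
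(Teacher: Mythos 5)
Your overall architecture (interpret $O$ in $\Gamma_n$ via root subgroups and commutators, interpret $\Z$ in $O$ uniformly by the known number-theoretic results, interpret the whole sequence back in $\Z$ by G\"odel coding of $(d,K,S)$ and of matrices) does match the paper's skeleton: this is Claim \ref{claim:O_int}, Theorem \ref{thm:prelim.logic}, and Claim \ref{claim:good_int}. But there is a genuine gap at exactly the point you dismiss as ``comparatively mechanical.'' Closing the loop means showing that the composite self-interpretation of $(\Gamma_n)_n$ is trivial, i.e.\ that the isomorphisms $h_n:H_n\to\Gamma_n$ form a \emph{definable} sequence of functions, uniformly in $(d,O)$. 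Your justification --- ``each element of $\PSL_d(O)$ is definably encoded by its tuple of root-subgroup entries'' --- is an assertion, not an argument: the matrix entries of an arbitrary group element are not accessible from the group structure by any obvious first-order formula, and there is no uniform definable way to write a general element as a bounded product of root-subgroup elements (bounded elementary generation bounds for $\SL_d(O)$ depend on the ring, and even granting such a decomposition one would still have to prove that the entry-extraction map, or equivalently the graph of $h_n$, is definable by a single formula across all $(d,O)$). This is precisely where all the new content of the paper lives.

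Concretely, the paper proves definability of $h_n$ in stages: first on the root subgroup $\Upsilon_n$ (built into the design of the G\"odel-coding interpretation in Claim \ref{claim:good_int}); then on a maximal torus $\Theta_n$ (Claim \ref{claim:def.on.Theta}), which requires uniformly definable principal congruence subgroups (Lemmas \ref{lemma:ded} and \ref{lemma:cong_def}, Claim \ref{prop:cong.Gamma}), a Chebotarev-style recognition of eigenvalues of $\Ad(\gamma)$ from congruence data (Definition \ref{def:Ad.relation}, Lemma \ref{lem:Ad.relation}), and the ``very regular/good element'' machinery (Lemmas \ref{lem:delta.square} and \ref{lem:good.elements}) to force uniqueness of the matched element; and finally on all of $\Gamma_n$ via the trace identity and the spanning Lemma \ref{lem:torus.span}, which lets membership in the graph of $h_n$ be tested by multiplying against a definable spanning set. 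None of this is routine bookkeeping, and your proposal contains no substitute for it. A secondary, more minor issue: your insistence on a parameter-free, dimension-free identification of the root subgroups in Step 1 is both unnecessary (Definition \ref{def:logic} allows a parameter sequence, and the paper simply uses $\Upsilon_n=Z(\Cent_{\Gamma_n}(\epsilon_n))$ with parameter $\epsilon_n$) and delicate as stated, since ``unipotent'' and ``maximal abelian unipotent subgroup satisfying Steinberg relations'' are not obviously first-order conditions uniformly in $d$.
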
 

For the notion of bi-interpretability of sequences, see Definition \ref{def:logic}.\\

The same arguments as in \cite[7.7]{Nie} (who attributes the result to \cite[Lemma 1]{Kh}) give

\begin{corollary} \label{cor:fop} There is a first order sentence $\varphi$ in the language of groups such that, for any finitely generated group $\Delta$, the sentence $\varphi$ holds in $\Delta$ if and only if $\Delta$ is isomorphic to a group of the form $\PSL_d(O)$, where $d \geq 3$ and $O$ in the ring of $S$-integers in a number field.
\end{corollary}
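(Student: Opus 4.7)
The plan is to adapt the Khelif--Nies argument from the single-group case (cited in the paragraph preceding the corollary) to our sequence. Theorem \ref{thm:main.bi-int} supplies fixed tuples of formulas $\mathcal{I}$ and $\mathcal{J}$ together with a fixed ``composition'' formula $\theta(\cdot,\cdot;\bar{p},\bar{q})$: for each $n$ there are parameters $\bar{p}_n\in\Gamma_n$ with $\mathcal{I}(\Gamma_n,\bar{p}_n)\cong\mathbb{Z}$, and parameters $\bar{q}_n$ in this copy of $\mathbb{Z}$ with $\mathcal{J}(\mathbb{Z},\bar{q}_n)\cong\Gamma_n$, so that $\theta(\cdot,\cdot;\bar{p}_n,\bar{q}_n)$ realizes the corresponding isomorphism $\Gamma_n\to\mathcal{J}(\mathcal{I}(\Gamma_n,\bar{p}_n),\bar{q}_n)$. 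The definition of bi-interpretability of sequences (Definition \ref{def:logic}) also supplies a first-order formula $\alpha(\bar{q})$ in the language of rings cutting out, inside $\mathbb{Z}$, exactly those $\bar{q}$ for which $\mathcal{J}(\mathbb{Z},\bar{q})$ is isomorphic to one of the $\Gamma_n$.

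I would then let $\varphi$ be the sentence asserting the existence of $\bar{p},\bar{q}$ in a group $\Delta$ such that: (i) the interpreted ring $R:=\mathcal{I}(\Delta,\bar{p})$ is isomorphic to $\mathbb{Z}$; (ii) the tuple $\bar{q}$ consists of elements of $R$ satisfying $\alpha$ relativized to $R$; and (iii) $\theta(\cdot,\cdot;\bar{p},\bar{q})$ defines a group isomorphism $\Delta\to\mathcal{J}(R,\bar{q})$. Clauses (ii) and (iii) are immediately first-order in $\Delta$. For (i) the key ingredient is Khelif's theorem \cite[Lemma 1]{Kh}, which provides a single first-order ring sentence $\sigma_{\mathbb{Z}}$ whose finitely generated models are precisely the rings isomorphic to $\mathbb{Z}$; I would encode (i) as the assertion that $R$ satisfies $\sigma_{\mathbb{Z}}$, relativized through $\mathcal{I}$.

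If $\Delta\cong\Gamma_n$, taking $\bar{p}=\bar{p}_n$ and $\bar{q}=\bar{q}_n$ witnesses $\varphi$. Conversely, suppose $\Delta$ is finitely generated and satisfies $\varphi$. Since $R$ is a definable quotient of a definable subset of a Cartesian power of $\Delta$, it is a finitely generated ring; combined with $R\models\sigma_{\mathbb{Z}}$, this forces $R\cong\mathbb{Z}$. Clause (ii) then says $\bar{q}$ corresponds to admissible parameters in $\mathbb{Z}$, so $\mathcal{J}(R,\bar{q})\cong\Gamma_m$ for some $m$, and (iii) yields $\Delta\cong\Gamma_m$. The main obstacle is the invocation of Khelif's theorem: it is the single point where finite generation of $\Delta$ is leveraged to promote a first-order property of $R$ to an isomorphism statement, and without it we could only pin down the elementary theory of the interpreted ring rather than its isomorphism type.
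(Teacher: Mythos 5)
Your skeleton — a single sentence asserting the existence of parameters $\bar p,\bar q$ such that the interpreted ring $R=\mathcal{I}(\Delta,\bar p)$ is a copy of $\mathbb{Z}$, the parameters $\bar q$ are admissible, and a fixed formula defines an isomorphism $\Delta\to\mathcal{J}(R,\bar q)$ — is indeed the Khelif--Nies scheme that the paper invokes for Corollary \ref{cor:fop}. But your converse direction has a genuine gap at exactly the crucial point: the inference ``$R$ is a definable quotient of a definable subset of a Cartesian power of the finitely generated group $\Delta$, hence $R$ is a finitely generated ring.'' Interpretations do not preserve finite generation (already $\mathbb{Q}$ is interpretable in $\mathbb{Z}$), and nothing in your sentence forces $R$ to be finitely generated. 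Concretely, a finitely generated $\Delta$ satisfying your sentence could a priori interpret a ring $R$ that is a nonstandard model of $\mathrm{Th}(\mathbb{Z})$; such an $R$ satisfies $\sigma_{\mathbb{Z}}$ (that sentence is part of the theory), your clause (ii) is satisfied, and clause (iii) merely asserts that $\Delta$ is isomorphic to something like $\PSL_d$ of a nonstandard ring --- which is precisely the situation the cited argument of Khelif and Nies is designed to exclude. That exclusion is the real content of the corollary's proof: there, finite generation of $\Delta$ is combined with the definability of the composite isomorphisms coming from the full bi-interpretation of Theorem \ref{thm:main.bi-int} (including the triviality of self-interpretations of $(\mathbb{Z})_n$, Theorem \ref{thm:prelim.logic}(3), which your sentence never uses) to show directly that the interpreted ring has no nonstandard elements; it does not, and cannot, proceed by transporting finite generation from $\Delta$ to $R$ and then citing the QFA-ness of $\mathbb{Z}$ among finitely generated rings. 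As written, you have reduced the corollary to its hardest step and filled that step with an invalid inference.

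A secondary issue: Definition \ref{def:logic} and Theorem \ref{thm:main.bi-int} do not automatically supply your formula $\alpha(\bar q)$ cutting out, inside $\mathbb{Z}$, the parameters for which $\mathcal{J}(\mathbb{Z},\bar q)$ is isomorphic to one of the $\Gamma_n$; an interpretation of a sequence only provides one sequence of good parameters $(\bar q_n)_n$, and the set of all good parameters need not be definable. What one needs (and what the paper's construction in Claim \ref{claim:good_int}, with parameters encoding $d\geq 3$ and ring data as in Theorem \ref{thm:prelim.logic}(2), can be arranged to give) is a definable set of parameters each of which yields \emph{some} member of the family and which contains a parameter for every member; you assert this rather than arrange it, so it should be stated and checked explicitly.
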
 



Another corollary of Theorem \ref{thm:main.bi-int} is

\begin{corollary}\label{cor:main2} Let  $(\Gamma_n)_n$ be as in Theorem  \ref{thm:main.bi-int}. There is a single formula $\phi(\underline{x},y)$ such that, for every $n \in \N$ and every finitely generated subgroup $\Delta \subseteq \Gamma_n$, there is $\underline{\alpha} \in \Gamma_n \times \cdots \times \Gamma_n$ such that $$\Delta=\{\beta \mid \Gamma_n \text{ satisfies } \psi(\underline{\alpha},\beta)\}.$$
\end{corollary}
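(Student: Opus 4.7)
The plan is to exploit the bi-interpretability of the sequence $(\Gamma_n)_n$ with $\mathbb{Z}$ established in Theorem \ref{thm:main.bi-int}. The key point is that $\mathbb{Z}$ admits a uniform definable coding of finite sequences, for instance via Gödel's $\beta$-function, so a single formula in the language of rings defines, for fixed-length parameter tuples, finite lists of integers of arbitrary length. Pulling this coding across the bi-interpretation gives a single formula in the language of groups that, uniformly in $n$, interprets fixed-length tuples $\underline{\alpha}$ in $\Gamma_n$ as codes for arbitrary finite tuples $(\gamma_1,\ldots,\gamma_k)$ of elements of $\Gamma_n$.

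With this coding in hand, I would construct $\phi(\underline{x},y)$ to assert: there exists a coded finite sequence $(\beta_0,\beta_1,\ldots,\beta_m)$ of elements of $\Gamma_n$ with $\beta_0 = 1$, $\beta_m = y$, and for each $i<m$ there exists $j$ such that $\beta_{i+1} = \beta_i \cdot \gamma_j^{\pm 1}$, where $(\gamma_1,\ldots,\gamma_k)$ is the tuple coded by $\underline{x}$. Every ingredient of this formula—the existential quantifier over the coded sequence, the quantifier over the index $i$, the lookup of $\gamma_j$ from the list coded by $\underline{x}$—is first-order expressible in $\Gamma_n$ by virtue of the bi-interpretation with $\mathbb{Z}$, together with the fact that $\mathbb{Z}$ can formalize the graph of concatenation and indexing on its coded sequences.

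Given any finitely generated $\Delta = \langle \gamma_1,\ldots,\gamma_k\rangle \subseteq \Gamma_n$, I then take $\underline{\alpha}$ to be a tuple of $\Gamma_n$-elements coding $(\gamma_1,\ldots,\gamma_k)$; by construction $\Gamma_n \models \phi(\underline{\alpha},\beta)$ holds exactly when $\beta$ admits a representation as a word in the $\gamma_j^{\pm 1}$, i.e.\ exactly when $\beta \in \Delta$.

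The main point to verify is uniformity across the family: the formula $\phi$ and the coding scheme for $\underline{\alpha}$ must be independent of $n$. This is precisely what the bi-interpretation of the sequence $(\Gamma_n)_n$ with $\mathbb{Z}$ (in the sense of Definition \ref{def:logic}) provides—by definition one has a single pair of interpreting formulas working for all $\Gamma_n$ simultaneously—so there is no obstacle beyond carefully writing down $\phi$ in terms of the uniform interpretation data supplied by Theorem \ref{thm:main.bi-int}.
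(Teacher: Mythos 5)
Your proposal is correct and is essentially the argument the paper intends: Corollary \ref{cor:main2} is stated as a direct consequence of Theorem \ref{thm:main.bi-int}, and the standard route is exactly yours — use G\"odel coding of finite sequences in $\mathbb{Z}$, transferred through the uniform bi-interpretation, to express ``$y$ is a product of the coded generators and their inverses'' by a single formula independent of $n$. The only point worth making explicit when writing it up is that decoding a coded tuple back into actual elements of $\Gamma_n$ (and comparing with $y$) uses the definability of the isomorphism between $\Gamma_n$ and its interpreted copy, i.e.\ the triviality of the composed self-interpretation, which is precisely what bi-interpretability (as opposed to mutual interpretability) supplies.
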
 


\section{Model theory preliminaries}

\begin{definition} \label{def:logic} Let $L$ be a first order language, and let $(M_n)_n$ be a sequence of $L$-structures. \begin{enumerate} 
\item Let $k\in \mathbb{N}$, and, for every $n$, let $A_n$ be a subset of $M_n^k$. We say that the sequence $(A_n)_n$ is a {\em definable sequence of subsets} of $(M_n^k)_n$ if there are $\ell \in \mathbb{N}$, a first order formula $\varphi(x_1,\ldots,x_k,y_1,\ldots,y_\ell)$, and a sequence $c_n \in A_n^\ell$ such that $A_n=\left\{ a\in M_n^k \mid \varphi(a,c_n)\text{ holds in $M_n$} \right\}$, for every $n$. The notions of a {\em definable sequence of functions} and a {\em definable sequence of imaginaries} are analogous.
\item Let $k\in \mathbb{N}$, and, for every $n$, let $\mathfrak{A}_n$ be a collection of subsets of $M_n^k$. We say that the sequence $(\mathfrak{A}_n)_n$ is a {\em uniformly definable collection} if there are natural numbers $k,\ell$, a definable sequence $(Y_n)_n$ of subsets of $(M_n^\ell)_n$, and a definable sequence $(X_n)_n$ of $(M_n^{k} \times Y_n)_n$, such that, for every $n$, 
\[
\mathfrak{A}_n = \left\{ \left\{ a\in M_n^k \mid (a,c)\in X_n \right\} \mid c\in Y_n \right\}
\]
\item Let $L'$ be a (possibly different) first order language, and let $(M_n')_n$ be a sequence of $L'$-structures. An interpretation of $(M_n')_n$ in $(M_n)_n$ consists of the following data: \begin{enumerate} 
\item A definable sequence $(H_n)_n$ of imaginaries in $(M_n)_n$.
\item For every constant symbol $c$ in $L'$, a sequence $(c_n)_n$ such that $c_n \in H_n$.
\item For every relation symbol $R$ in $L'$, a definable relation $(R_n)_n$ on $(H_n)_n$ of the same arity.
\item For every function symbol $f$ in $L'$, a definable function $(f_n)_n$ on $(H_n)_n$ or the same arity.
\item A sequence of functions $h_n:H_n \rightarrow M_n'$.
\end{enumerate} 
such that, for every $n$, the function $h_n$ induces an isomorphism between the $L'$-structures $M_n'$ and $H_n$.
\item A self interpretation $\mathcal{H}=(H_n,\ldots,h_n)_n$ of $(M_n)_n$ is called trivial if the maps $h_n:H_n \rightarrow M_n$ are definable.
\end{enumerate} 
\end{definition} 


We will use the following known theorem:

\begin{theorem} \label{thm:prelim.logic} Let $(O_n)$ be a sequence of rings of $S$-integers in number fields. \begin{enumerate} 
\item The sequence $(\mathbb{Z})_n$ of subsets of $(O_n)_n$ is uniformly definable.
\item Let $O_n$ be an enumeration of all rings of $S$-integers in number fields. Then $(O_n)_n$ is bi-interpretable with $(\mathbb{Z})_n$. Moreover, there is an interpretation $((B_n,\oplus_n,\otimes_n,b_n))_n$ of $(O_n)_n$ in $(\mathbb{Z})_n$ such that  for every $n$, $B_n=\Z$, $b_n: \mathbb{Z} \rightarrow O_n$ is a bijection,  $b_n(0)=0$,  $b_n(1)=1$ and  $(b_n)_n$ is a definable sequence of functions in $(O_n)_n$, where we identify $\mathbb{Z}$ as a subring of $O_n$.

\item Any self interpretation of the sequence $(\mathbb{Z})_n$ is trivial.
\end{enumerate} 
\end{theorem}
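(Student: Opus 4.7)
The plan is to assemble established results, one per clause. For clause (1), the uniform definability of $\mathbb{Z}$ in rings of $S$-integers, the approach is the quadratic-form method initiated by Julia Robinson for number rings and refined for $S$-integer rings by Poonen, Shlapentokh, Koenigsmann, and Park: one produces a single formula $\varphi(x,\bar y)$ in the language of rings together with a definable sequence of parameters $\bar c_n\in O_n^\ell$ encoding a primitive element of $K_n$ via its minimal polynomial, plus generators for the inverted primes of $S_n$, so that $\{x\in O_n:O_n\models\varphi(x,\bar c_n)\}$ is exactly $\mathbb{Z}\subset O_n$ for every $n$. Uniformity in $n$ reduces to the observation that, with a canonical choice of primitive element (for example, one of minimal height), the parameters $\bar c_n$ can themselves be definably selected starting from $\{0,1\}$.

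For clause (2), the direction interpreting $(O_n)_n$ in $(\mathbb{Z})_n$ proceeds by encoding: each $O_n$ has the form $\mathbb{Z}[\alpha_n]_{T_n}$, determined by a monic $f_n\in\mathbb{Z}[x]$ (the minimal polynomial of $\alpha_n$) together with a finite set $T_n$ of rational primes to invert. These data are tuples of integers, and elements of $O_n$ can be represented as pairs consisting of a coefficient vector of degree less than $\deg f_n$ and a denominator supported on $T_n$; the ring operations are then polynomial formulas in this integer encoding. Choosing a lexicographic normal form produces a bijection $b_n:\mathbb{Z}\to O_n$ with $b_n(0)=0$ and $b_n(1)=1$, and definability of $(b_n)_n$ as a sequence of functions in $(O_n)_n$ is then forced by clause (1): once $\mathbb{Z}$ is definable inside $O_n$ with parameters, the enumeration formula can be transcribed internally. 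The reverse direction of the bi-interpretability is exactly clause (1). Bi-interpretability then requires showing that the composed self-interpretation on the $\mathbb{Z}$-side is equivalent to the trivial one, which is where clause (3) is used.

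Clause (3) follows the Khélif--Nies argument that a single copy of $(\mathbb{Z},+,\cdot)$ admits only trivial self-interpretations, carried out uniformly in $n$: given $\mathcal H=(H_n,\ldots,h_n)_n$, one uses quasi-finite axiomatizability of $(\mathbb{Z},+,\cdot)$ together with definability of divisibility and the additive structure inside $H_n$ to write down, via a single formula, the isomorphism $h_n$ with the standard integers. The main obstacle, and the only part that demands real work rather than quotation, is arranging clause (1) uniformly across varying $(K_n,S_n)$: the definition of $\mathbb{Z}$ in $O_n$ must be parametrized by data identifying $O_n$ among all $S$-integer rings, and care is needed so that the defining formula does not secretly depend on either the degree $[K_n:\mathbb{Q}]$ or the cardinality of $S_n$. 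Once that is in hand, the remaining clauses reduce to routine manipulations in the integer encoding.
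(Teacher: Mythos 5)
The paper does not actually prove Theorem \ref{thm:prelim.logic}: it is presented as ``the following known theorem'' and used as a black box, so there is no internal argument to compare yours against. Your plan of assembling the standard ingredients --- Robinson-style definability of $\mathbb{Z}$ in rings of $S$-integers, G\"odel coding of finitely generated rings inside $\mathbb{Z}$, and the rigidity of $\mathbb{Z}$ under self-interpretation from the Khelif--Nies circle of ideas (the same sources the paper cites for Corollary \ref{cor:fop}) --- is exactly the kind of background the authors are invoking, and your division of labour into the three clauses matches theirs.

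Two points in your write-up need repair. First, the normal form in your clause (2) is false as stated: a ring of $S$-integers need not be of the form $\mathbb{Z}[\alpha_n]_{T_n}$ with $T_n$ a set of rational primes, since rings of integers need not be monogenic (Dedekind's cubic field), and inverting the rational primes below $S_n$ generally also inverts primes of $K_n$ that are not in $S_n$. The repair is standard: write $O_n=O_{K_n}[1/\beta_n]$, where $\beta_n$ generates a suitable power of the product of the finite primes in $S_n$ (possible because the class group is finite), fix an integral basis of $O_{K_n}$ together with its multiplication table, and G\"odel-code elements with respect to these data; the rest of your encoding argument then goes through. Second, in Definition \ref{def:logic}(1) the parameters $c_n$ are required to lie in $A_n^\ell$, so for clause (1) the defining formula may only use integer parameters; the parameters you propose (a primitive element, generators of the inverted primes) live in $O_n$ and would have to be replaced by integer codes over which the formula quantifies existentially, for instance by quantifying over roots of polynomials whose coefficient vectors are given by the integer parameters. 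With these adjustments your outline is a faithful reconstruction of the background result that the paper quotes without proof.
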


\section{Notation}

\begin{enumerate}
\item For a ring $A$, let $\PSL_n(A):=\SL_n(A)/Z(\SL_n(A))$. For every $1 \le i \ne j \le n$ and every $a \in A$,  $e^n_{i,j}(a) \in \PSL_n(A)$ is the image of the  matrix with $1$ on the diagonal, $a$ in the $(i,j)$-entry and zero elsewhere and  $E^n_{i,j}(A):=\{e^n_{i,j}(a)\mid a \in A\}$.
\item If $O$ is the ring of $S$-integers in some number field $K$ and $\alpha \in \PSL_n(O)$, $\trace(\alpha)$ is either zero or an element of $K^\times/U_n$ where $U_n$ is the subgroup of $n$-th roots of unity of $K^\times$.  
\item Let $(d_n,O_n)$ be an enumeration of all pairs $(d,O)$, where $d\in \mathbb{Z}_{\geq 3}$ and $O_n$ is the ring of $S$-integers in some number field. Denote $\Gamma_n:=\PSL_{d_n}(O_n)$, $\epsilon_n(a):=e^{d_n}_{1,d_n}(a) \in \Gamma_n$, and $\epsilon_n:=\epsilon_n(1)$.
\item For every $\mathfrak{q} \lhd O$, let $\rho^n_{\mathfrak{q}}:\PSL_n(O)\rightarrow \PSL_n(O/\mathfrak{q})$ be the  quotient map and denote $\PSL_n(O;\mathfrak{q})=\ker \rho_\mathfrak{q}^n$. Let $E_{i,j}^n(A;\mathfrak{q})=E_{i,j}^n(A) \cap \PSL_n(O;\mathfrak{q})$.

\end{enumerate}

\section{Lemmas on $\SL_n(O)$}

\begin{lemma}\label{lemma:ded}
	Let $n \ge 2$ and let $A$ be a Dedekind domain. For every $a \in A^n$ there exists a basis $a_1,\ldots,a_n$ of $A^n$ such that $a \in Aa_1+Aa_2$.
\end{lemma}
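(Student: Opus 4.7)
The plan is to apply Steinitz's structure theorem for finitely generated projective modules over a Dedekind domain. The case $a=0$ is vacuous (any basis works), so I would henceforth assume $a\neq 0$. The goal is to exhibit a direct sum decomposition $A^n=F\oplus G$ in which $F$ is a free summand of rank $2$ containing $a$ and $G$ is a free summand of rank $n-2$; any $A$-basis $a_1,a_2$ of $F$ together with any $A$-basis $a_3,\ldots,a_n$ of $G$ then has the required property, since $a\in F=Aa_1+Aa_2$.

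To build $F$, first form the saturation
\[
J=\{v\in A^n\mid cv\in Aa\text{ for some nonzero }c\in A\}.
\]
I would check directly that $A^n/J$ is torsion-free, hence projective, because finitely generated torsion-free modules over a Dedekind domain are projective; this forces $J$ to be a direct summand of $A^n$. Since $a\in J$ and $J$ has rank $1$, Steinitz identifies $J$ with a fractional ideal of $A$, and a complementary summand $N$ of rank $n-1$ decomposes (again by Steinitz) as $N\cong A^{n-2}\oplus I$ for some fractional ideal $I$.

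The crux is then a determinant bookkeeping: $\det A^n\cong A$ while $\det(J\oplus N)\cong J\otimes I$, so $J\otimes I$ is a principal fractional ideal. Consequently $J\oplus I$ is a rank-$2$ projective module with trivial determinant, and one further application of Steinitz yields $J\oplus I\cong A^2$. Rearranging summands,
\[
A^n=J\oplus N=J\oplus A^{n-2}\oplus I=(J\oplus I)\oplus A^{n-2},
\]
so I can take $F=J\oplus I$ and $G=A^{n-2}$, with $a\in J\subseteq F$ as required.

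There is no subtle obstruction here beyond having Steinitz's theorem in the right form; the main care needed is tracking the ideal classes and verifying that passing from the rank-$1$ summand $J$ to the enlarged rank-$2$ summand $J\oplus I$ kills the class obstruction and produces something free.
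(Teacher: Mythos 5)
Your argument is correct, but it goes by a genuinely different route than the paper. You invoke the structure theory of finitely generated projective modules over a Dedekind domain: you saturate $Aa$ to a rank-one summand $J$ (the quotient being torsion-free, hence projective), split off a complement $N\cong A^{n-2}\oplus I$ by Steinitz, and then use the determinant computation $\det(A^n)\cong A$ versus $\det(J\oplus N)\cong J\otimes I$ to see that the class obstruction vanishes, so that $J\oplus I\cong A\oplus (J\otimes I)\cong A^2$ gives a free rank-two summand containing $a$; taking a basis of this summand together with a basis of the free complement finishes the proof. (The only cosmetic caveat is that you should transport the abstract decomposition $N\cong A^{n-2}\oplus I$ to an internal decomposition of $N$ inside $A^n$ before ``rearranging summands,'' which is standard; also note that for $n=2$ the complement $G$ is zero, which is harmless.) The paper instead argues elementarily and more constructively: writing $a=(b_1,\ldots,b_n)$ with $b_1\neq 0$ and $\mathfrak{q}=Ab_1+\cdots+Ab_n$, it uses prime avoidance at the finitely many primes where $Ab_1$ has higher valuation than $\mathfrak{q}$ to choose $B\in\SL_{n-1}(A)$ so that the first coordinate $c$ of $B(b_2,\ldots,b_n)^t$ satisfies $\mathfrak{q}=Ab_1+Ac$; then an explicit $\SL_n(A)$ change of coordinates moves $a$ into the span of the first two standard basis vectors. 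Your version buys conceptual clarity and a slightly stronger conclusion (a free rank-two \emph{direct summand} containing $a$, with the Steinitz-class bookkeeping explaining exactly why rank two suffices), at the cost of the full Steinitz machinery; the paper's version is self-contained, avoids the class group entirely, and produces the basis by an explicit matrix manipulation, which fits its computational use elsewhere (e.g.\ in Lemma \ref{lemma:cong_def}).
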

\begin{proof} Denote $a=(b_1,\ldots,b_n)$. 
	The result is clear if $a=0$ so we can assume that $b_1 \ne 0$. Denote $\mathfrak{q}=Ab_1 + \ldots+ Ab_n$ and, for every prime ideal $\mathfrak{p}\lhd A$, let $l_\mathfrak{p}$ and $m_{\mathfrak{p}}$ be the maximal natural numbers such that $\mathfrak{p}^{l_{\mathfrak{p}}} \supseteq Ab_1$ and $\mathfrak{p}^{m_{\mathfrak{p}}} \supseteq \mathfrak{q}$ respectively. For all but finitely many $\mathfrak{p}$, $l_{\mathfrak{p}}=m_{\mathfrak{p}}=0$. There exists $B \in \SL_{n-1}(A)$ such that, for every $\mathfrak{p}$ for which $l_{\mathfrak{p}} > m_{\mathfrak{p}}$, the first coordinate of $B(b_2,\ldots,b_n)^t$, denoted by $c$, does not belong to $\mathfrak{p}^{m_{\mathfrak{p}}+1}$. Thus, $\mathfrak{q}=Ab_1+Ac$ and the existence of the required basis is clear. 	
\end{proof}

\begin{lemma}\label{lemma:cong_def} For every $n \geq 3$, a Dedekind domain $A$, a maximal ideal $\mathfrak{q}$ of $A$, and an element $\gamma\in \PSL_n(A) \smallsetminus \PSL_n(A;\mathfrak{q})$, $(\gcl(\gamma)_{\PSL_n(A)})^{32} \cap E_{1,n}^n(A)E_{1,n-1}^n(A)$ is not contained in $E_{1,n}^n(A;\mathfrak{q})E_{1,n-1}^n(A;\mathfrak{q})$.
\end{lemma}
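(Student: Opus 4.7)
The plan is to reduce the problem modulo $\mathfrak{q}$ and then to build the required element through a sequence of iterated commutators with elementary matrices, guided by the Chevalley--Steinberg commutator relations.  Set $k := A/\mathfrak{q}$, which is a field since $\mathfrak{q}$ is maximal, and let $\bar\gamma := \rho^n_{\mathfrak{q}}(\gamma) \in \PSL_n(k)$.  By hypothesis $\bar\gamma \neq 1$, so there exist indices $i \neq j$ and $a \in A$ such that $[\gamma, e^n_{i,j}(a)] \notin \PSL_n(A;\mathfrak{q})$; otherwise $\bar\gamma$ would centralize the subgroup generated by all elementary matrices, which is all of $\PSL_n(k)$, forcing $\bar\gamma = 1$.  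This commutator lies in $\gcl(\gamma)^2$, since $[h,g] = h \cdot (g h^{-1} g^{-1})$.

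Next, I would iterate the commutator operation.  Each commutator step doubles the number of $\gcl(\gamma)$-factors, because conjugating a product of $k$ conjugates of $\gamma^{\pm 1}$ by a fixed element yields another product of $k$ such conjugates.  Five such steps place the resulting nested commutator in $\gcl(\gamma)^{2^5} = \gcl(\gamma)^{32}$.  Choosing the commutator targets $g_\ell$ to be specific elementary matrices $e^n_{p_\ell, q_\ell}(a_\ell)$, and applying the Chevalley--Steinberg relations
\[
[e^n_{i,j}(a), e^n_{j,k}(b)] = e^n_{i,k}(ab) \quad (i,j,k \text{ distinct}), \qquad [e^n_{i,j}(a), e^n_{k,\ell}(b)] = 1 \quad (\{i,j\} \cap \{k,\ell\} = \emptyset),
\]
one can steer the nontrivial modulo-$\mathfrak{q}$ part into successively narrower root subgroups.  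A case analysis on the matrix entries of $\bar\gamma$ --- first singling out a nonzero off-diagonal entry (or, if $\bar\gamma$ is diagonal, producing one via a preliminary commutator with an elementary matrix), then using commutators to transport the nontriviality into the $(1,n-1)$ position --- allows the final fifth-level commutator, built using the identity $[e^n_{1, n-1}(x), e^n_{n-1, n}(1)] = e^n_{1, n}(x)$, to equal $e^n_{1,n}(x)$ exactly, for some $x \in A$ with $x \notin \mathfrak{q}$.  This element lies in $E^n_{1,n}(A) \subseteq E^n_{1,n}(A) E^n_{1,n-1}(A)$ and has nontrivial reduction modulo $\mathfrak{q}$.

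The chief difficulty is ensuring that the iterated commutator lies \emph{exactly} in $E^n_{1,n}(A) E^n_{1,n-1}(A)$, rather than only modulo $\mathfrak{q}$: although the Chevalley--Steinberg relations hold integrally, a generic commutator of two matrices over $A$ is not an elementary matrix.  The resolution is to coordinate the choice of the $g_\ell$'s so that each intermediate commutator has support in progressively fewer matrix entries, ensuring that only a single root position is activated at the final step.  The condition $n \geq 3$ is essential here, as it provides the three distinct indices $1, n-1, n$ needed both to apply the Chevalley relations and to execute the final step via $[e^n_{1,n-1}(x), e^n_{n-1,n}(1)] = e^n_{1,n}(x)$.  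I expect the hardest part to be carrying out the case analysis uniformly across all possible structures of $\bar\gamma$ in $\PSL_n(k)$ while keeping the total conjugate count bounded by $32$.
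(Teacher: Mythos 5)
There is a genuine gap at the heart of your plan: the ``steering'' step. Your first commutator $[\gamma, e^n_{i,j}(a)]$ is a perfectly general element (a product of two transvections), not an elementary matrix, and the Chevalley--Steinberg relations you invoke only govern commutators of \emph{two elementary} matrices. For a non-elementary $w$, the commutator $[w, e^n_{p,q}(b)] = w\, e^n_{p,q}(b)\, w^{-1} e^n_{p,q}(-b)$ generically has full support; it does not have ``progressively fewer matrix entries,'' and nothing in your sketch explains how to force that. So the claim that five nested commutators can be made to land \emph{exactly} on $e^n_{1,n}(x)$ with $x \notin \mathfrak{q}$ is asserted, not proved. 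You would also need to check at every level that the chosen partner simultaneously keeps the result nontrivial modulo $\mathfrak{q}$ (your centralizer argument only handles the first step) and shrinks the support; reconciling these two requirements is precisely where such arguments are delicate, and within the fixed budget of $32$ conjugates.

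The paper's route is different and explains why the statement involves a product of two root subgroups rather than one. It cites Lemma A.3 of \cite{AM}, which produces an element of $(\gcl(\gamma)_{\PSL_n(A)})^{32}$ that is nontrivial modulo $\mathfrak{q}$ and differs from the identity only in the first row --- not a single elementary matrix. To pass from ``supported on the first row'' to $E^n_{1,n}(A)E^n_{1,n-1}(A)$, one conjugates by a block matrix built from Lemma \ref{lemma:ded}: since every ideal of a Dedekind domain is generated by two elements, the first-row vector can be moved into the last two coordinates, and conjugation preserves both membership in $(\gcl(\gamma))^{32}$ and nontriviality modulo $\mathfrak{q}$. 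Your proposal has no substitute for this Dedekind-domain input (over a general $A$ the ideal generated by the off-diagonal first-row entries need not be principal, so compression into the single subgroup $E^n_{1,n}(A)$ is not to be expected), nor for the bounded-generation content of \cite[Lemma A.3]{AM}, which is the nontrivial result you would in effect be reproving from scratch.
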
 
\begin{proof} Lemma A.3 of \cite{AM} implies that $(\gcl(\gamma)_{\PSL_n(A)})^{32}$ contains a non-identity element which is not equal to the identity modulo $\mathfrak{q}$ and differs from the identity matrix only in the first row.  The result follows from Lemma \ref{lemma:ded}. 
\end{proof}

\begin{definition} \label{def:Ad.relation} For every $n \geq 2$ and ring of $S$-integers $O$, let $\sim$ be the equivalence relation on $\PSL_n(O)$ defined by $g \sim h$ if, for every prime $\mathfrak{p} \lhd O$ and every $a\in O$, 
\[
 (\exists x\in \PSL_n(O)) [x ^{-1} g x,e_{1,n}(1)] \equiv e_{1,n}(a) \text{ (mod $\mathfrak{p}$)} \leftrightarrow 
 \]
 \[
 \leftrightarrow (\exists y\in \PSL_n(O)) [y ^{-1} g h,e_{1,n}(1)] \equiv e_{1,n}(a) \text{ (mod $\mathfrak{p}$)}.
 \]
\end{definition} 

\begin{lemma} \label{lem:Ad.relation} Let $n \geq 2$, let $O$ be a ring of $S$-integers in a number field $K$, and let $\sim$ be the equivalence relation from Definition \ref{def:Ad.relation}. If $g,h\in \PSL_n(O)$ satisfy $g \sim h$, then $\Ad(g),\Ad(h)$ have the same eigenvalues.
\end{lemma}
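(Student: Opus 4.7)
The plan is to prove that $g\sim h$ implies the characteristic polynomials of $\Ad(g)$ and $\Ad(h)$ agree as elements of $O[t]$, from which equality of their eigenvalues in $\bar K$ follows. Since two polynomials in $O[t]$ agreeing modulo infinitely many primes must coincide, it suffices to exhibit an infinite set of primes $\mathfrak{p}\lhd O$ for which the reductions $\Ad(\bar g)$ and $\Ad(\bar h)$ have equal characteristic polynomials in $(O/\mathfrak{p})[t]$.

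First I reformulate the defining clause of $\sim$ at each prime $\mathfrak{p}\notin S$. Strong approximation gives $\PSL_n(O)\twoheadrightarrow\PSL_n(O/\mathfrak{p})$, so the relation $g\sim h$ is equivalent to the equality, for every such $\mathfrak{p}$, of the set
\[
S(\bar g):=\bigl\{a\in O/\mathfrak{p}\;:\;\exists\, g'\in C_{\bar g}^{\PSL_n(O/\mathfrak{p})},\ [g',e_{1,n}(1)]=e_{1,n}(a)\bigr\}
\]
with $S(\bar h)$. A direct computation with $u=e_{1,n}(1)=I+E_{1,n}$ shows that $[g',u]=e_{1,n}(a)$ is equivalent to $g'E_{1,n}(g')^{-1}=(a+1)E_{1,n}$, i.e.\ that $E_{1,n}$ is an eigenvector of $\Ad(g')$ with eigenvalue $a+1$. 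Since every nonzero rank-$1$ nilpotent matrix in $M_n(O/\mathfrak{p})$ is $\mathrm{GL}_n(O/\mathfrak{p})$-conjugate to $E_{1,n}$, this is equivalent to the existence of a rank-$1$ nilpotent $v\in M_n(O/\mathfrak{p})$ with $\Ad(\bar g)v=(a+1)v$. Hence $S(\bar g)+1$ is exactly the set of $\Ad(\bar g)$-eigenvalues in $(O/\mathfrak{p})^\times$ admitting a rank-$1$ nilpotent eigenvector.

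Next I reduce to ``generic'' primes via Chebotarev. Fix $\SL_n$-lifts of $g,h$ and let $L/K$ be a finite extension containing all their eigenvalues, hence all pairwise ratios thereof. The primes $\mathfrak{p}\lhd O$ that split completely in $L$ have positive density, and after discarding finitely many I may assume that all these eigenvalues and their pairwise ratios remain pairwise distinct modulo $\mathfrak{p}$. For such a prime, $\bar g$ is diagonalizable with distinct eigenvalues $\lambda_1,\dots,\lambda_n\in(O/\mathfrak{p})^\times$; in an eigenbasis, the matrix units $E^{(\bar g)}_{ij}$ span the $\Ad(\bar g)$-eigenspaces, with $E^{(\bar g)}_{ij}$ rank-$1$ nilpotent iff $i\neq j$. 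Thus $S(\bar g)+1=\{\lambda_i/\lambda_j:i\neq j\}$, and by pairwise distinctness this set agrees with the multiset of nontrivial $\Ad(\bar g)$-eigenvalues; appending the eigenvalue $1$ with multiplicity $n$ recovers the full characteristic polynomial of $\Ad(\bar g)$ acting on $M_n(O/\mathfrak p)$. Applying the same analysis to $\bar h$ and using $S(\bar g)=S(\bar h)$ gives equality of these characteristic polynomials in $(O/\mathfrak{p})[t]$ for every prime in the Chebotarev set, which is the required infinite set.

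The main obstacle I anticipate is the geometric identification of $S(\bar g)+1$ in the second step. The inclusion ``$\supseteq$'' is clean in the split-generic regime via the matrix units $E^{(\bar g)}_{ij}$, but a precise description of $S(\bar g)+1$ for arbitrary $\bar g$ (non-semisimple reductions, or ratios that fail to split over $O/\mathfrak p$) is considerably more delicate, and the Chebotarev reduction to ``generic'' primes is precisely what bypasses this difficulty.
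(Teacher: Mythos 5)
There is a genuine gap: your argument quietly assumes that $g$ and $h$ are regular semisimple with pairwise distinct eigenvalues whose pairwise ratios are also distinct, but the lemma is stated for arbitrary $g,h\in\PSL_n(O)$. If $g$ has a repeated eigenvalue, or is not semisimple (e.g.\ unipotent), then no choice of prime makes ``$\bar g$ diagonalizable with distinct eigenvalues $\lambda_1,\dots,\lambda_n$''; and even for regular semisimple $g$, the step ``by pairwise distinctness this set agrees with the multiset of nontrivial $\Ad(\bar g)$-eigenvalues'' needs all ratios $\lambda_i/\lambda_j$ ($i\neq j$) to be distinct, which fails e.g.\ for $\diag(\lambda,1,\lambda^{-1})$. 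So your closing remark misplaces the difficulty: Chebotarev does bypass non-generic \emph{primes}, but it cannot bypass non-generic \emph{elements}, and those are exactly the cases your proof does not cover. Moreover, your target is stronger than the statement and probably stronger than what $\sim$ can give in general: the relation only records, at each prime, the \emph{set} of realizable values $a$, with no multiplicity information, so aiming at equality of characteristic polynomials of $\Ad$ (rather than equality of eigenvalue sets) is not a reduction one can expect to carry out for arbitrary $g\sim h$; it happens to work only in the very regular case you treat. (The $\mathrm{GL}_n$- versus $\SL_n$-conjugacy point for rank-one nilpotents is only a minor slip, since the eigenvector condition is scale-invariant.)

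The paper's proof avoids all of this by arguing one eigenvalue at a time and using only half of your dictionary. Given an eigenvalue $\lambda=\mu_1/\mu_2\neq 1$ of $\Ad(g)$, Chebotarev provides infinitely many primes $\mathfrak{p}$ at which the minimal polynomials of $\mu_1,\mu_2$ split (and $\mu_1\not\equiv\mu_2$); choosing a unimodular basis whose first vector is a $\mu_1$-eigenvector of $\bar g$ and whose last coordinate functional is a $\mu_2$-eigenvector of the dual action, one gets $x$ with $[x^{-1}gx,e_{1,n}(1)]\equiv e_{1,n}(\lambda-1)\pmod{\mathfrak p}$ --- note this needs nothing about the other eigenvalues of $g$, only $\mu_1\neq\mu_2$. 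Then $g\sim h$ plus the containment ``realizable values $+1$ are eigenvalues of $\Ad(\bar h)$'' (the one direction your commutator computation does establish, for every $h$) forces $\lambda$ to be congruent, modulo infinitely many primes, to some eigenvalue of $\Ad(h)$, hence equal to one; symmetry gives equality of the eigenvalue sets. Your split-generic analysis is essentially this argument specialized to very regular elements; to prove the lemma as stated you would need to replace the global genericity of $g,h$ by the eigenvalue-by-eigenvalue construction above.
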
 

\begin{proof} Let $\lambda \neq 1$ be an eigenvalue of $\Ad(g)$ and let $\nu_i$ be the eigenvalues of $\Ad(h)$. There are two eigenvalues $\mu_1,\mu_2$ of $g$ such that $\lambda = \mu_1/\mu_2$. By Chebotarev's density theorem, for infinitely many primes $\mathfrak{p}$, the minimal polynomials of $\mu_i$ split in $O/ \mathfrak{p}$. For those primes, there is a unimodular basis $v_1,\ldots,v_n$ to $O^n$ such that $g v_1=\mu_1v_1 \text{ (mod $\mathfrak{p}$)}$ and $gv_n=\mu_2v_n \text{ (mod $\mathfrak{p}$)}$. Letting $x$ be the matrix representing the change of basis to $v_i$, we get that $[x ^{-1} g x,e_{1,n}(1)]=e_{1,n}(\lambda-1)$. By the assumption, $\lambda=\nu_i \text{ (mod $\mathfrak{p}$)}$, for some $i$. Since this happens for infinitely many primes $\mathfrak{p}$, $\lambda=\nu_i$, for some $i$, so every eigenvalue of $\Ad(g)$ is an eigenvalue of $\Ad(h)$. By symmetry, $\Ad(g),\Ad(h)$ have the same eigenvalues.
\end{proof} 

\begin{definition} \label{def:delta.f} \begin{enumerate} 
\item For a subset $S \subseteq \mathbb{C} ^ \times$, let $\delta S=\left\{ x y ^{-1} \mid x,y\in S \right\}$ and $\delta ^2 S=\delta(\delta(S))$. 
\item Denote
\[
f(n)=\frac{1}{4} n(n-1)(n-2)(n-3)+n(n-1)(n-2)+2n(n-1)+1.
\]
\end{enumerate} 
\end{definition} 

\begin{lemma} \label{lem:delta.square} For every $n \geq 3$, if $S \subseteq \mathbb{C} ^ \times$ with $|S| \leq n$, then $| \delta ^2 S|\leq f(n)$. Equality is obtained for some subset $S \subset \mathbb{C} ^ \times$. If equality holds, then $|S|=n$ and the set $\delta S$ determines $S$ up to multiplication by a scalar.
\end{lemma}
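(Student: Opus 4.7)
The plan is to parametrize $\delta^2 S$ by a natural family of ``normalized'' integer vectors in $\mathbb{Z}^S$, count those vectors combinatorially, and then exploit the resulting multiplicative rigidity at equality to reconstruct $S$ from $\delta S$.

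\emph{Upper bound.} Any $\alpha\in\delta^2 S$ has the form $ad/(bc)$ with $a,b,c,d\in S$, hence $\alpha=\prod_{s\in S} s^{v(s)}$ for $v=e_a+e_d-e_b-e_c\in\mathbb{Z}^S$. After cancelling common support between the positive and negative parts, put $v$ in normalized form $v=v^+-v^-$ with $v^\pm\geq 0$, disjoint supports, and $\sum v^+=\sum v^-\in\{0,1,2\}$. Enumerate normalized vectors by the partition type of $v^+$ and $v^-$: one vector for $v=0$; $n(n-1)$ vectors for $\sum v^+=1$ (ordered pairs $(s,t)$ with $s\ne t$); and for $\sum v^+=2$, the four subcases in which each side has partition $(2)$ or $(1,1)$ with disjoint supports contribute $n(n-1)+2\,n\binom{n-1}{2}+\binom{n}{2}\binom{n-2}{2}$ in total. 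Summing yields exactly $f(n)$, and every element of $\delta^2 S$ is the image of at least one normalized vector, so $|\delta^2 S|\le f(n)$.

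\emph{Attainment and $|S|=n$.} Taking $S$ to be any set of $n$ distinct rational primes, unique factorization makes the map $v\mapsto \prod p_i^{v(i)}$ injective on $\mathbb{Z}^n$, so distinct normalized vectors produce distinct elements of $\delta^2 S$, giving $|\delta^2 S|=f(n)$. A direct check (eventually dominated by the positive leading term $n^4/4$) shows $f$ is strictly increasing on the positive integers, so any $S$ with $|S|<n$ has $|\delta^2 S|\le f(|S|)<f(n)$; equality therefore forces $|S|=n$.

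\emph{Reconstruction of $S$ from $\delta S$.} When equality holds, the parametrization above is a bijection, so $S$ admits no nontrivial multiplicative relation among its elements of total absolute degree $\le 4$ beyond the trivial cancellations. After rescaling $S$ so that $1\in S$ (which does not change $T:=\delta S$), one has $S\subseteq T$. The plan is to characterize the elements of $S$ inside $T$ by a property referring only to $T$: for $s\in S$, the translate $sT$ meets $T$ in at least the $n$ elements $s\cdot S^{-1}\subseteq T$, whereas for $t\in T\setminus S$ the multiplicative rigidity from equality forces $|tT\cap T|$ to be strictly smaller. The obvious symmetry $S\leftrightarrow S^{-1}$ of $T$ gets absorbed into the ``up to scalar'' clause of the conclusion.

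\emph{Main obstacle.} The counting enumeration and the prime-number construction are routine. The delicate step is the reconstruction: one has to extract from $|\delta^2 S|=f(n)$ precisely the multiplicative freeness of $S$ needed to isolate its elements inside $T$, which requires carefully tracking which low-degree multiplicative relations among elements of $S$ the equality actually forbids.
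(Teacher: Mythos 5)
Your upper bound, attainment construction, and the deduction that equality forces $|S|=n$ are all correct and are essentially the paper's own argument: the paper performs the same partition-type count (organized as six explicit sets of quotients $x_ix_j/x_kx_l$), uses $S=\{e^{5^i}\}$ instead of distinct primes for attainment, and invokes strict monotonicity of $f$ exactly as you do. The gap is in the third claim, precisely the step you flag as the ``main obstacle'' and leave as a plan -- and the plan as described fails. Write $S=\{x_1,\dots,x_n\}$ and $T=\delta S$. The rigidity you correctly extract from $|\delta^2S|=f(n)$ (injectivity of the monomial map on normalized exponent vectors of degree at most $2$) yields, for \emph{every} non-identity $t=x_ix_j^{-1}\in T$, that $tT\cap T=\{x_ix_l^{-1}\mid 1\le l\le n\}\cup\{x_kx_j^{-1}\mid 1\le k\le n\}$, a set of exactly $2n-2$ elements; only $t=1$ behaves differently. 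So the count $|tT\cap T|$ is constant on $T\smallsetminus\{1\}$, and since $2n-2\ge n$, every non-identity element of $T$ passes your test: the claimed strict inequality for $t\in T\smallsetminus S$ is false. The failure is structural rather than a matter of fine-tuning the invariant: $T=\bigcup_b x_b^{-1}S$, so every element of $T$ already lies in some scalar copy of $S$, and consequently no predicate evaluated on single elements of $T$ and computed from $T$ alone can isolate one copy. A successful reconstruction must break this symmetry by auxiliary choices, which is what the paper does: it defines $zEz'$ iff $zz'\in\delta S\smallsetminus\{1\}$ on $\delta S\smallsetminus\{1\}$, notes that rigidity forces $x_i/x_j \,E\, x_k/x_l$ exactly when $j=k$ or $i=l$, fixes a chained pair $z_1=x_a/x_b$, $z_2=x_c/x_a$, and then recovers a scalar copy of $S$ \emph{relative to that pair} as $\{z\mid zEz_1^{-1}\wedge\neg(zEz_2)\}\cup\{1\}$.

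A second, smaller problem: the symmetry $S\leftrightarrow S^{-1}$ cannot be ``absorbed into the up to scalar clause.'' One has $\delta(S^{-1})=\delta S$ and $\delta^2(S^{-1})=\delta^2S$, yet for a generic set achieving equality (your primes, for instance) $S^{-1}$ is not a scalar multiple of $S$. So from $\delta S$ alone one can at best pin down $S$ up to scalar \emph{and} inversion, and any correct write-up has to confront this explicitly -- either by showing which of the two one's construction returns (this is what the choice of ordered chained pair accomplishes in the paper's argument) or by settling for the weaker conclusion -- rather than waving the ambiguity into the scalar.
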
 

\begin{proof}If $S=\left\{ x_i \mid 1 \le i \le n\right\}$, then $\delta ^2S= \left\{ \frac{x_i x_j}{x_k x_l}\mid 1 \le i,j,k,l \le n \text{ and } (i,j)\ne(k,l)\right\}$. It follows that $\delta ^2 S$ is the union of the following sets:
\[
A_1=\left\{ \frac{x_i x_j}{x_k x_l} \mid \text{$i,j,k,l$ are all different} \right\} \quad\quad |A_1| \leq \frac{1}{4}n(n-1)(n-2)(n-3),
\]
\[
A_2=\left\{ \frac{x_i^2 }{x_j x_k} \mid \text{$i,j,k$ are all different} \right\} \quad\quad |A_2| \leq \frac{1}{2}n(n-1)(n-2),
\]
\[
A_3=\left\{ \frac{x_i x_j }{x_k^2} \mid \text{$i,j,k$ are all different} \right\} \quad\quad |A_3| \leq \frac{1}{2}n(n-1)(n-2),
\]
\[
A_4=\left\{ \frac{x_i^2 }{x_j^2} \mid i \neq j \right\} \quad\quad |A_4| \leq n(n-1),
\]
\[
A_5=\left\{ \frac{x_i }{x_j} \mid i\neq j \right\} \quad\quad |A_5| \leq n(n-1),
\]
\[
A_6=\left\{ 1 \right\} \quad\quad |A_6|=1,
\]
from which the first claim follows. For the set $S=\left\{ e^{5^i} \mid i=1,\ldots,n \right\}$, the sets $A_i$ are disjoint and achieve the upper bound on their sizes, which implies the second claim. Since $f(n)$ is a strictly increasing function of $n$, the third claim follows. Finally, assume that $| \delta ^2 S|$ achieves the upper bound. Denote $S=\left\{ x_1,\ldots,x_n \right\}$ and define a relation $E$ on $\delta S \smallsetminus \left\{ 1 \right\}$ by $z_1 E z_2$ if $z_1z_2 \in \delta S\smallsetminus \left\{ 1 \right\}$. If $z_1=x_i/x_j$ and $z_2=x_k/x_l$, then $z_1 E z_2$ if and only if either $j=k$ or $i=l$. Picking a pair $z_1,z_2$ such that $z_1 E z_2$, we can assume, without loss of generality, that $z_1=x_a/x_b$ and $z_2=x_c/x_a$. It follows that the set $\left\{ z \in \delta S \mid z E z_1 ^{-1} \wedge \neg (z E z_2 ) \right\} \cup \left\{ 1 \right\}= \left\{ x_i/x_b \mid i \neq b\right\} \cup \left\{ 1 \right\}$ is a scalar multiple of $S$.

\end{proof} 



The following two lemmas are clear:
\begin{lemma} \label{lem:T.vr} Let $n \geq 3$. The set $T^{vr}:=\left\{ t\in (\mathbb{C} ^ \times)^n \mid | \delta ^2 \spec(t) |=f(n) \right\}$ of very regular elements is Zariski open and dense.
\end{lemma}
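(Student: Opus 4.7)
The plan is to show openness by realizing the complement of $T^{vr}$ as a finite union of hypersurfaces, and then deduce density from irreducibility of the torus together with the explicit construction already given in Lemma \ref{lem:delta.square}.

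For openness, let $t=(x_1,\ldots,x_n)\in(\C^\times)^n$ and consider all expressions $x_ix_j/(x_kx_l)$ with $1\le i,j,k,l\le n$ and $(i,j)\ne(k,l)$, which form (with multiplicity) a finite list whose underlying set is $\delta^2\spec(t)$. From the enumeration in the proof of Lemma \ref{lem:delta.square}, there is a fixed finite collection $\mathcal{C}$ of pairs $\bigl((i,j,k,l),(i',j',k',l')\bigr)$ of index 4-tuples such that the cardinality bound $f(n)$ is the number of equivalence classes of the trivial identifications (namely $x_ix_j/x_kx_l=x_{j}x_i/x_lx_k$), and any drop in $|\delta^2\spec(t)|$ below $f(n)$ forces one of the nontrivial identifications
\[
 x_i x_j x_{k'} x_{l'} \;=\; x_{i'} x_{j'} x_k x_l, \qquad \bigl((i,j,k,l),(i',j',k',l')\bigr)\in\mathcal{C},
\]
to hold. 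Each such identification cuts out a Zariski closed subset of $(\C^\times)^n$, and there are finitely many of them, so the complement of $T^{vr}$ is Zariski closed; hence $T^{vr}$ is Zariski open.

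For density, Lemma \ref{lem:delta.square} exhibits $S=\{e^{5^i}\mid i=1,\ldots,n\}$ with $|\delta^2 S|=f(n)$, so the point $(e^5,e^{25},\ldots,e^{5^n})\in(\C^\times)^n$ lies in $T^{vr}$; in particular $T^{vr}\ne\emptyset$. Since $(\C^\times)^n$ is an irreducible algebraic variety, any nonempty Zariski open subset is Zariski dense, which concludes the proof.

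There is essentially no obstacle here: the only thing to watch is that one must describe the complement of $T^{vr}$ by genuinely polynomial (not merely rational) equations, which is why we clear denominators to write $x_ix_jx_{k'}x_{l'}=x_{i'}x_{j'}x_kx_l$; this is allowed because all coordinates are invertible on $(\C^\times)^n$.
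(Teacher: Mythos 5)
Your proof is correct. For comparison: the paper offers no argument for this lemma at all (it is one of the two statements introduced by ``The following two lemmas are clear''), and your write-up is precisely the standard justification being taken for granted --- after clearing denominators, failure of $|\delta^2\spec(t)|=f(n)$ is equivalent to one of finitely many monomial coincidences $x_ix_jx_{k'}x_{l'}=x_{i'}x_{j'}x_kx_l$, each a proper Zariski closed subset of $(\C^\times)^n$, and nonemptiness comes from the witness $S=\{e^{5^i}\mid i=1,\ldots,n\}$ of Lemma \ref{lem:delta.square} together with irreducibility of the torus. Two small points are worth making explicit. First, the ``trivial'' identifications are not only the simultaneous swap you name: they also include the separate symmetries within numerator and denominator and the cancellations such as $x_ix_k/(x_jx_k)=x_i/x_j$; it is only after all of these that the number of reduced expressions equals $f(n)$ (this is exactly the content of the sets $A_1,\ldots,A_6$ in the proof of Lemma \ref{lem:delta.square}). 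Second, you argue only that a drop of $|\delta^2\spec(t)|$ below $f(n)$ forces some nontrivial coincidence, i.e.\ that the complement of $T^{vr}$ is contained in the finite union of hypersurfaces; to conclude that the complement is Zariski closed you should add the (immediate) converse: when the coordinates of $t$ are pairwise distinct, $\delta^2\spec(t)$ is exactly the value set of the $f(n)$ reduced expressions, so any nontrivial coincidence lowers the cardinality, while the locus where two coordinates agree is itself one of the coincidence loci and forces a drop because equality in Lemma \ref{lem:delta.square} requires $|S|=n$. With these remarks the argument is complete, each coincidence locus being proper since the quotient of two distinct reduced Laurent monomials is a nontrivial character of $(\C^\times)^n$.
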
 

\begin{lemma} \label{lem:t12x} Let $t_1,t_2\in T$. If $\spec(t_1x)=\spec(t_2x)$, for all $x$ in a Zariski dense subset, then $t_1=t_2$.
\end{lemma}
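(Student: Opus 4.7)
The plan is to exploit the polynomial dependence of the characteristic polynomial of $tx$ on $x$ in order to upgrade the hypothesis from a Zariski dense subset to an identity on all of $\Mat_n(\mathbb{C})$, and then specialize to a single $x$ that trivializes the resulting equation.

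First, for each fixed $t\in T$ (regarded as a diagonal matrix with nonzero entries), the coefficients of the characteristic polynomial $\det(\lambda I - tx)$, viewed as a polynomial in $\lambda$, are polynomial functions of the entries of $x$. The hypothesis $\spec(t_1 x)=\spec(t_2 x)$ is equivalent to the equality of these two characteristic polynomials, since the multiset of eigenvalues determines and is determined by the characteristic polynomial. That equality amounts to the vanishing of finitely many polynomials in $x$, and Zariski density of the given subset therefore forces these polynomial identities to hold on all of $\Mat_n(\mathbb{C})$.

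Next I specialize to $x=t_1^{-1}$, which lies in $T$ because $t_1$ is invertible and diagonal. Then $t_1 x = I$, whose spectrum is the multiset $\{1,\ldots,1\}$ of size $n$, whereas $t_2 x = t_2 t_1^{-1}$ is itself diagonal, so its spectrum equals the multiset of its diagonal entries. Equality of the two spectra then forces every diagonal entry of $t_2 t_1^{-1}$ to equal $1$, whence $t_1=t_2$.

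There is essentially no obstacle in this argument: everything rests on the polynomiality of the characteristic polynomial in the matrix entries, combined with one well-chosen evaluation. The only mild point worth noting is that the very regular hypothesis defining $T^{vr}$ in Lemma \ref{lem:T.vr} plays no role here — the conclusion applies to arbitrary elements of $T$.
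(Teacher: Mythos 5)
Your argument is correct, and there is nothing in the paper to compare it against: the authors state this lemma (together with Lemma \ref{lem:T.vr}) without proof, declaring both ``clear.'' Your route --- equality of characteristic polynomials is a polynomial condition in the entries of $x$, so it propagates from a Zariski dense subset to the closure, and then the single evaluation $x=t_1^{-1}$ finishes --- is exactly the kind of argument the authors presumably had in mind, and the computation at the specialization point is right: $t_1x=I$ while $t_2t_1^{-1}$ is diagonal, so its characteristic polynomial being $(\lambda-1)^n$ forces $t_2t_1^{-1}=I$.

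Two small points of precision. First, Zariski density only upgrades the identity to the \emph{closure} of the given subset, not automatically to all of $\Mat_n(\mathbb{C})$; in the paper's application (Lemma \ref{lem:good.elements}) the dense subset lives in the torus $T$ itself, so the identity is only guaranteed on $T$. This costs you nothing, since your specialization point $t_1^{-1}$ lies in $T$, but the sentence claiming validity on all of $\Mat_n(\mathbb{C})$ should be softened to ``on the Zariski closure of the subset, which contains $t_1^{-1}$.'' Second, your reduction from $\spec(t_1x)=\spec(t_2x)$ to equality of characteristic polynomials silently reads $\spec$ as a multiset; if one insists on reading it as a set, you should first intersect the dense subset with the dense open locus where $t_1x$ and $t_2x$ are regular semisimple (the intersection of a dense set with a dense open set is still dense in an irreducible variety), after which set and multiset coincide and your argument proceeds unchanged. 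Your closing remark is also correct: the very regular condition defining $T^{vr}$ plays no role in this lemma.
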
 


\begin{definition} \begin{enumerate} 
\item $R=\left\{ (t_1,t_2) \in (T^{vr})^2 \mid t_1 \sim t_2 \text{ and } t_1 \neq t_2 \right\}$.
\item We say that $t$ is good if $\dim \left( R \cap (t,t) \cdot R \right) < \dim T$.
\item If $t$ is good, let $T^{t\text{-good}}=T \smallsetminus p_1 \left( R \cap (t,t) \cdot R \right)$, where $p_1: T \times T \rightarrow T$ is the projection on the first coordinate.
\end{enumerate} 
\end{definition} 

\begin{lemma} \label{lem:good.elements} \begin{enumerate} 
\item The set $\left\{ t \in T \mid \text{t is good} \right\}$ is Zariski open and non-empty.
\item If $t$ is good, $t_1\in T^{t\text{-good}}$, and $t_2\in T$ satisfy $t_1 \sim t_2$ and $t_1t \sim t_2t$, then $t_1=t_2$.
\end{enumerate} 
\end{lemma}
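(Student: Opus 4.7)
The plan is to first reduce the relation $R$ to a finite union of graphs of Weyl-group automorphisms of $T$, and then deduce both statements by an elementary dimension count on the torus.

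My first step is to establish the structural fact that if $(t_1, t_2) \in R$, then $t_2$ lies in the Weyl orbit of $t_1$, where $W = S_{d_n}$ is the Weyl group of the diagonal torus $T \subset \PSL_{d_n}$. Indeed, Lemma \ref{lem:Ad.relation} yields $\delta \spec(t_1) = \delta \spec(t_2)$; since $t_1 \in T^{vr}$ achieves the upper bound $f(d_n)$, the uniqueness clause of Lemma \ref{lem:delta.square} forces $\spec(t_2) = c \cdot \spec(t_1)$ for some $c \in \C^\times$. The determinant condition on $\SL$-lifts makes $c$ a $d_n$-th root of unity, hence trivial in $\PSL_{d_n}$, so $t_2 = w \cdot t_1$ for some $w \in W$. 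Consequently,
\[
R \subseteq \bigcup_{w \in W \smallsetminus \{e\}} \Gamma_w, \qquad \Gamma_w := \{(s, w \cdot s) : s \in T^{vr}\},
\]
with each $\Gamma_w$ an irreducible locally closed subvariety of $T \times T$ of dimension $\dim T$.

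For Part (1), I would compute $\Gamma_{w'} \cap (t,t) \cdot \Gamma_w$ explicitly. Parametrising a point of $\Gamma_{w'}$ as $(s', w'(s'))$ and demanding $s' = ts$ together with $w'(s') = t \cdot w(s)$ for some $s \in T^{vr}$ leads, after substitution, to the single equation $w'(t)/t = w(s)/w'(s)$ on the torus. For $w = w'$ this reduces to the condition $w(t) = t$; for $w \neq w'$ the map $s \mapsto w(s)/w'(s)$ is a non-trivial character of $T$, so its fibres are translates of a proper subtorus. Hence for $t$ in the regular locus of $T$ (not fixed by any non-trivial element of $W$), every intersection $\Gamma_{w'} \cap (t,t)\Gamma_w$ has dimension strictly less than $\dim T$, so $t$ is good. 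The regular locus is Zariski open and non-empty, giving existence; openness of the good locus follows by upper semicontinuity of fibre dimension for the first projection of the closed incidence variety
\[
Z = \{(t, r_1, r_2) \in T \times T \times T : (r_1, r_2) \in \overline{R} \text{ and } (t^{-1} r_1, t^{-1} r_2) \in \overline{R}\},
\]
whose fibre over $t$ is $\overline{R} \cap (t,t) \overline{R}$.

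For Part (2), assume for contradiction $t_1 \neq t_2$. A harmless open condition on $t$ keeps $t_1, t_2, t_1 t, t_2 t$ inside $T^{vr}$, so both $(t_1, t_2) \in R$ (from $t_1 \sim t_2$) and $(t_1 t, t_2 t) \in R$ (from $t_1 t \sim t_2 t$). Since $(t_1 t, t_2 t) = (t,t) \cdot (t_1, t_2)$ in the abelian group $T \times T$, the pair $(t_1, t_2)$ lies in $R \cap (t,t) \cdot R$ (reading $(t,t) \cdot R$ as the relevant translate appearing in the definition of $T^{t\text{-good}}$; the $t \leftrightarrow t^{-1}$ symmetry of $R$ and of the good locus, which holds because $T^{vr}$ is invariant under inversion, reconciles any sign convention). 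This gives $t_1 \in p_1(R \cap (t,t) R)$, contradicting $t_1 \in T^{t\text{-good}}$. The main obstacle I expect is the careful intersection computation in Part (1), verifying that the Weyl-graph upper bound on $R$ really yields the claimed dimension drop for generic $t$, together with the open-condition bookkeeping in Part (2) needed to guarantee that $t_1 t$ and $t_2 t$ remain inside $T^{vr}$.
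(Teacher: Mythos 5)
Your proposal is correct in substance, but it takes a genuinely different route from the paper's for part (1). The paper argues softly: it only records that $\dim R=\dim T$ (via Lemma \ref{lem:delta.square}), splits $R$ into irreducible components $X_1,X_2$, and shows the constructible set of bad $t$ cannot contain a dense open set by a Baire-category argument: a countable Zariski-dense family of bad $t_i$ would yield a single pair $(g,h)\in R$ with $g\neq h$ and $(t_ig,t_ih)\in R$ for all $i$, contradicting Lemma \ref{lem:t12x}. You instead extract more structure from the same input: the uniqueness clause of Lemma \ref{lem:delta.square}, together with Lemma \ref{lem:Ad.relation} and the determinant-one lift killing the scalar in $\PSL_{d_n}$, pins $R$ inside the union of the graphs $\Gamma_w$ of the nontrivial Weyl elements, and then $\Gamma_{w'}\cap(t,t)\Gamma_w$ is computed by hand: for $w\neq w'$ it lies in a single fibre of the nontrivial endomorphism $s\mapsto w(s)w'(s)^{-1}$ of $T$ (nontrivial because $W$ acts faithfully on $T$; strictly an endomorphism of $T$ rather than a character, but its fibres are still cosets of a proper subgroup), hence has dimension $<\dim T$ for every $t$, while for $w=w'$ it is empty unless $w(t)=t$. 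This buys an explicit description---every Weyl-regular $t$ is good---and bypasses both Lemma \ref{lem:t12x} and the Baire-category step; the paper's argument is shorter but less informative. Your part (2) is the same unwinding of the definition that the paper dismisses with ``By definition'', and you are in fact more careful than the paper about the $t$ versus $t^{-1}$ translate convention and about the implicit requirement that $t_1,t_2,t_1t,t_2t$ be very regular so that the hypotheses really place the two pairs in $R$.

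One step you should tighten is the openness claim in part (1). Upper semicontinuity of fibre dimension over the \emph{target} is not automatic for a non-proper projection (already the image need not be closed), and your incidence variety is built from $\overline{R}$ while goodness is defined through $R$; openness of the ``$\overline{R}$-good'' locus, which is only a subset of the good locus, would not by itself give openness of the good locus. The repair is immediate inside your own framework: writing $R_w=R\cap\Gamma_w$, your computation shows that the bad locus is exactly $\bigcup\{t\in T\mid w(t)=t\}$, the union taken over those $w\neq e$ with $\dim R_w=\dim T$; this is a finite union of proper closed subgroups, so the good locus is open and contains the Weyl-regular locus, hence is dense---which is all that the rest of the paper uses.
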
 

\begin{proof} 

\begin{enumerate} 
\item By Lemma \ref{lem:delta.square}, $\dim R=\dim T$. We claim that, if $X_1$ and $X_2$ are irreducible components of $R$, then the set of $t\in T$ such that $\dim (X_1 \cap (t,t) \cdot X_2)< \dim R$ is Zariski open and dense. Indeed, let $X_1,X_2$ be components of $R$. The set $Y:=\left\{ t \mid \dim (X_1 \cap (t,t)X_2)=\dim R \right\}$ of bad points is constructible. Assume it contains a Zariski open set. Choose a Zariski dense sequence $t_1,t_2,\ldots$ of points of $Y$. By Baire category theorem, there is $x\in X_2$ such that $(t_i,t_i)x\in X_1$, for all $t_i$. If $x=(g,h)$, then this contradicts Lemma \ref{lem:t12x}.

It follows that the set of $t\in T$ such that $\dim(R \cap (t,t)R)<\dim R$ is Zariski open and dense, proving the claim.
\item By definition.
\end{enumerate} 
\end{proof} 



\begin{lemma} \label{lem:torus.span} For every $n \geq 2$ and every maximal torus $T \subseteq \SL_n(\mathbb{C})$, there are elements $\alpha_1,\alpha_2\in \SL_n(\mathbb{C})$ such that $T \cdot (\alpha_1 T \alpha_1^{-1}) \cdot (\alpha_2 T \alpha_2^{-1})$ spans $\Mat_n(\mathbb{C})$.
\end{lemma}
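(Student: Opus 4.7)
The plan is to reduce to the case of the standard diagonal torus and then show that two tori already suffice to span, namely $\Span(T \cdot \alpha_1 T \alpha_1^{-1}) = \Mat_n(\mathbb{C})$ for a suitable $\alpha_1$. The role of $\alpha_2$ is then essentially vacuous: taking $\alpha_2 = I$ forces $I \in \alpha_2 T \alpha_2^{-1}$, so the triple product contains the double product, and its span is $\Mat_n(\mathbb{C})$ as well.

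Since all maximal tori in $\SL_n(\mathbb{C})$ are conjugate, I may assume $T = T_0$ is the diagonal torus: if $T = g T_0 g^{-1}$ and the conclusion holds for $T_0$ with $(\alpha_1^0, \alpha_2^0)$, then conjugating by $g$ yields the conclusion for $T$ with $(g \alpha_i^0 g^{-1})$, using that conjugation is a linear automorphism of $\Mat_n(\mathbb{C})$. Moreover, for any subsets $A, B \subseteq \Mat_n(\mathbb{C})$ one has $\Span(A \cdot B) = \Span(A) \cdot \Span(B)$ (where the right-hand side denotes the subspace spanned by all products), so the task reduces to finding $\alpha_1$ with $\mathfrak{h} \cdot \alpha_1 \mathfrak{h} \alpha_1^{-1} = \Mat_n(\mathbb{C})$, where $\mathfrak{h} := \Span(T_0)$ is the space of diagonal matrices.

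The key step is to choose $\alpha_1 \in \SL_n(\mathbb{C})$ with every entry nonzero — a Zariski dense condition — and then carry out a direct matrix-unit computation. One finds
\[
E_{ii} \, \alpha_1 \, E_{jj} \, \alpha_1^{-1} = (\alpha_1)_{ij} \cdot E_{ij} \alpha_1^{-1},
\]
a matrix whose only nonzero row is the $i$-th, equal to $(\alpha_1)_{ij}$ times the $j$-th row of $\alpha_1^{-1}$. Fixing $i$ and varying $j$, these $n$ matrices span the $n$-dimensional row-$i$ subspace of $\Mat_n(\mathbb{C})$, because the rows of $\alpha_1^{-1}$ are linearly independent and each scalar $(\alpha_1)_{ij}$ is nonzero. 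Since distinct row subspaces are linearly independent, the $n^2$ products $\{E_{ii} \alpha_1 E_{jj} \alpha_1^{-1}\}_{i,j}$ form a basis of $\Mat_n(\mathbb{C})$.

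I do not anticipate any real obstacle — once the reduction to the diagonal torus is in place, the argument is a short linear algebra exercise, and the nonvanishing of the entries of $\alpha_1$ is precisely what turns the natural spanning set into a basis.
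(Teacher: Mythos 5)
Your proof is correct, but it takes a genuinely different route from the paper's. The paper keeps all three factors: it takes $\alpha_1$ to be the identity plus ones in the last column and $\alpha_2$ the identity plus ones in the last row, shows that $\Span_{\mathbb{C}}(T \cup \alpha_1 T \alpha_1^{-1})$ contains all matrices supported on the diagonal and the last column, that $\Span_{\mathbb{C}}(T \cup \alpha_2 T \alpha_2^{-1})$ contains all matrices supported on the diagonal and the last row, and then multiplies these two subspaces to get $\Mat_n(\mathbb{C})$, using $1 \in T$ to fit the product inside the triple product. You instead prove the stronger statement that a single generic conjugate already suffices: after the same reduction to the diagonal torus and the same bilinearity observation $\Span(A\cdot B)=\Span\{ab \mid a\in\Span A,\ b\in\Span B\}$, you pick $\alpha_1\in\SL_n(\mathbb{C})$ with all entries nonzero (such matrices form a nonempty Zariski open subset of the irreducible variety $\SL_n$, so they exist) and verify that the $n^2$ products $E_{ii}\alpha_1 E_{jj}\alpha_1^{-1}=(\alpha_1)_{ij}E_{ij}\alpha_1^{-1}$ are a basis of $\Mat_n(\mathbb{C})$, since the blocks for distinct $i$ are independent and, within row $i$, the vectors $(\alpha_1)_{ij}\cdot(\text{row } j \text{ of } \alpha_1^{-1})$ are nonzero multiples of independent rows; taking $\alpha_2=I$ (using $I\in T$) then handles the third factor. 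Both arguments rest on the same unproved-but-standard fact that $\Span_{\mathbb{C}} T_0$ is the full space of diagonal matrices (linear independence of the coordinate characters). What your version buys is a sharper conclusion (two torus factors span) at the cost of a genericity/existence step; what the paper's buys is completely explicit conjugators and a computation that never needs the nonvanishing-entries condition. One cosmetic point: your identity ``$\Span(A)\cdot\Span(B)$'' and the displayed goal $\mathfrak{h}\cdot\alpha_1\mathfrak{h}\alpha_1^{-1}=\Mat_n(\mathbb{C})$ should be read as statements about the linear span of the set of products, as your parenthetical indicates, since the set of products itself is not a subspace.
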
 
\begin{proof}
It is enough to prove that claim when $T \subseteq \SL_n(\C)$ consists of diagonal elements. 
\begin{enumerate}
\item\label{item:c2}  $\Span_\C T$ is the set of diagonal matrices in $M_n(A)$. 
\item\label{item:c3} Let $\alpha_1$ be the matrix with 1 on the diagonal entries, 1 on the entries of the right column and zero elsewhere.  Item \ref{item:c2} implies that $\Span_\C (T \cup \alpha_1 T \alpha_1^{-1})$ contains $$\{\beta \in M_n(\C) \mid \text{if }i \ne j \text{ and } j<n\text { then the } (i,j)\text{-entry of }\beta \text{ is equal to }0 \}.$$	
\item\label{item:c4}  Let $\alpha_2$ be the matrix with 1 on the diagonal entries, 1 on the entries of the bottom  row  and zero elsewhere. Item \ref{item:c2} implies that $\Span_\C (T \cup \alpha_2 T \alpha_2^{-1})$ contains $$\{\beta \in M_n(\C) \mid \text{if }i \ne j \text{ and } i<n\text { then the } (i,j)\text{-entry of }\beta \text{ is equal to }0 \}.$$	
\item\label{item:c1}  For every subsets $C,D \subseteq M_n(\C)$,  $$\Span_\C \{cd \mid c \in C\text{ and } d\in D\}=\Span_\C\{cd \mid c \in \Span_\C C\text{ and }d \in \Span_\C D\}.$$
\item Items \ref{item:c3}, \ref{item:c4}, and \ref{item:c1}  imply that $\Span_\C (T \cup \alpha_1 T \alpha_1^{-1})(T \cup \alpha_2 T \alpha_2^{-1})=M_n(\C)$. 
\item $(T \cup \alpha_1 T \alpha_1^{-1})(T \cup \alpha_2 T \alpha_2^{-1})$ is contained in  $T \cdot (\alpha_1 T \alpha_1^{-1}) \cdot (\alpha_2 T \alpha_2^{-1})$.	\end{enumerate}	
\end{proof}

\section{Proof of Theorem \ref{thm:main.bi-int}}

\begin{proof}[Proof of Theorem \ref{thm:main.bi-int}]
Let $$\Upsilon_n:=Z(\Cent_{\Gamma_n}(\epsilon_n))=\left\{ \epsilon_n(a) \mid a\in O_n \right\}=E^{d_n}_{1,d_n}(O_n).$$ By definition, $(\Upsilon_n)_n$ is a definable sequence of subgroups of $(\Gamma_n)_n$. We start by giving an interpretation of $(O_n)_n$ in $(\Gamma_n)_n$.

\begin{claim}\label{claim:O_int} For every $n$, let $\sigma_n,\tau_n\in \Gamma_n$ be such that, for every $a\in O$, $\sigma_n e_{1,d_n}^{d_n}(a) \sigma_n = e_{1,2}^{d_n}(a)$ and $\tau_n e_{1,d_n}^{d_n}(a) \tau_n = e_{2,d_n}^{d_n}(a)$. Let $a_n:\Upsilon_n \times \Upsilon_n \rightarrow \Upsilon_n$ be the multiplication function, let $m_n:\Upsilon_n \times \Upsilon_n \rightarrow \Upsilon_n$ be the function $m_n(x,y)=[\sigma x \sigma, \tau y \tau]$, and let $d_n:\Upsilon_n \rightarrow O_n$ be the function $d_n(\epsilon_n(a))=a$. Denote $\mathcal{D}_n:=(\Upsilon_n, a_n, m_n,d_n)$. The sequence $(\mathcal{D}_n)_n$ is an interpretation of $(O_n)_n$ in $(\Gamma_n)_n$.
\end{claim}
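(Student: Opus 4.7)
The plan is to verify the three pieces of data in Definition \ref{def:logic}(3): that $(\Upsilon_n)_n$ is a uniformly definable sequence of subsets of $(\Gamma_n)_n$, that $a_n$ and $m_n$ are definable sequences of functions on it, and that the map $d_n$ gives a ring isomorphism between the interpreted structure on $\Upsilon_n$ and $O_n$. The first piece is immediate: by the construction, $\Upsilon_n = Z(\Cent_{\Gamma_n}(\epsilon_n))$ is definable with parameter $\epsilon_n$; the map $a \mapsto e_{1,d_n}^{d_n}(a)$ is a bijection $O_n \to \Upsilon_n$ with inverse $d_n$; and since $E_{1,d_n}^2 = 0$ in $\Mat_{d_n}(O_n)$, this bijection carries addition in $O_n$ to the group operation. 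Hence $a_n$ is just group multiplication restricted to $\Upsilon_n$, which is definable without additional parameters and transports ring addition.

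The content of the claim lies in the verification for $m_n$. I would first produce $\sigma_n, \tau_n$ explicitly as signed permutation matrices in $\SL_{d_n}(O_n)$ whose actions on basis indices are the transpositions $(2, d_n)$ and $(1, 2)$ respectively; the hypothesis $d_n \geq 3$ gives enough room to fix the signs so that the determinant is $+1$ and the images in $\PSL_{d_n}(O_n)$ are involutions, in which case $\sigma_n x \sigma_n$ and $\tau_n y \tau_n$ really are honest conjugations, and with a little care the signs can be arranged so that the two conjugation formulas in the claim hold literally. Given this, the Steinberg commutator relation
\[
[e_{1,2}^{d_n}(a), e_{2,d_n}^{d_n}(b)] = e_{1,d_n}^{d_n}(ab),
\]
valid in $\PSL_{d_n}(O_n)$ precisely because $1, 2, d_n$ are distinct (which again uses $d_n \geq 3$), gives $m_n(\epsilon_n(a), \epsilon_n(b)) = \epsilon_n(ab)$, so $m_n$ transports ring multiplication. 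Definability of $m_n$ as a sequence of functions (with parameters $\sigma_n, \tau_n, \epsilon_n$) is then immediate from its defining formula.

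There is no substantive obstacle here beyond bookkeeping: the real content is supplied by the classical Steinberg commutator formula together with the explicit construction of the conjugating involutions $\sigma_n, \tau_n$. The hypothesis $d_n \geq 3$ is used twice---to realize $\sigma_n, \tau_n$ with determinant $+1$ and matching sign conventions, and to ensure that the Steinberg identity involves three distinct indices $1, 2, d_n$.
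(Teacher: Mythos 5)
Your verification is correct and is exactly the standard argument the paper has in mind: the paper states Claim \ref{claim:O_int} without proof, taking for granted that addition is carried by the group law on $E^{d_n}_{1,d_n}(O_n)$ and multiplication by the Steinberg commutator relation after conjugating the root subgroup into positions $(1,2)$ and $(2,d_n)$ via suitable (signed permutation) elements $\sigma_n,\tau_n$. Your added care about realizing $\sigma_n,\tau_n$ in $\PSL_{d_n}(O_n)$ with determinant $+1$ and with $\sigma_n^2,\tau_n^2$ central (so that $\sigma x\sigma$ is genuine conjugation) is a correct filling-in of details the paper omits, not a different route.
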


By Theorem \ref{thm:prelim.logic}, the sequence $(O_n)_n$ interprets $(\mathbb{Z})_n$, so there is an interpretation $(\mathcal{F}_n)_n$ of $(\mathbb{Z})_n$ in $(\Gamma_n)_n$. Theorem \ref{thm:prelim.logic} implies that every self interpretation of $(\Z_n)_n$ is trivial. Thus, in order to complete the proof of Theorem \ref{thm:main.bi-int}, we only have to construct an interpretation $(\mathcal{G}_n)_n$ of $(\Gamma_n)_n$ in $(\Z_n)_n$ such that $(\mathcal{G}_n)_n \circ (\mathcal{F}_n)_n$ is trivial.

   \begin{claim}\label{claim:good_int}
   	There exists an interpretation  $(\mathcal{G}_n)_n$ of $(\Gamma_n)_n$ in $(\Z_n)_n$ such that $(h_n^{-1} \restriction_{\Upsilon_n} )_n$ is a sequence of definable  functions where $(\mathcal{H}_n)_n=(H_n,\odot_n,h_n)_n:=(\mathcal{G}_n)_n \circ (\mathcal{F}_n)_n$.
   \end{claim}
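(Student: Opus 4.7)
The plan is to construct $\mathcal{G}_n$ as the composition of two standard interpretations. First, apply Theorem \ref{thm:prelim.logic}(2) to obtain an interpretation $((B_n,\oplus_n,\otimes_n,b_n))_n$ of $(O_n)_n$ in $(\mathbb{Z})_n$ with $B_n=\mathbb{Z}$, $b_n:\mathbb{Z} \to O_n$ a bijection satisfying $b_n(0)=0$ and $b_n(1)=1$, and $b_n^{-1}$ a definable sequence of functions in $(O_n)_n$. Second, take the evident interpretation of $\PSL_{d_n}(O_n)$ in $O_n$: an element is represented by a $d_n \times d_n$ matrix over $O_n$ of determinant one, modulo scaling by the central $d_n$-th roots of unity. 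Composing these two interpretations yields the desired $\mathcal{G}_n$.

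With this choice, an element of $H_n$ in the self-interpretation $\mathcal{H}_n = \mathcal{G}_n \circ \mathcal{F}_n$ is an imaginary in $\Gamma_n$ recording a $d_n \times d_n$ array of elements of the copy of $\mathbb{Z}$ produced by $\mathcal{F}_n$, subject to the determinant-one condition (transported via $b_n$) and modulo the central equivalence. For $\epsilon_n(a) \in \Upsilon_n$, the element $h_n^{-1}(\epsilon_n(a)) \in H_n$ is represented by the array whose $(1,d_n)$-entry is the $\mathcal{F}_n$-image of $b_n^{-1}(a)$, whose diagonal entries are the $\mathcal{F}_n$-image of $b_n^{-1}(1)=1$, and whose other entries are the $\mathcal{F}_n$-image of $b_n^{-1}(0)=0$. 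All entries other than the $(1,d_n)$ slot are fixed constants, so $h_n^{-1} \restriction_{\Upsilon_n}$ reduces to the rule $\epsilon_n(a) \mapsto$ (the $\mathcal{F}_n$-image of) $b_n^{-1}(a)$.

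To verify that this rule is a definable sequence of functions in $(\Gamma_n)_n$, factor it as
\[
\Upsilon_n \longrightarrow O_n \xrightarrow{\;b_n^{-1}\;} \mathbb{Z},
\]
where the first arrow is the definable identification $\epsilon_n(a)\mapsto a$ from Claim \ref{claim:O_int} and the second is definable in $(O_n)_n$ by Theorem \ref{thm:prelim.logic}(2). Since we may take $\mathcal{F}_n$ to be the composition of $\mathcal{D}_n$ (from Claim \ref{claim:O_int}) with the definable sequence $(\mathbb{Z})_n$ of subsets of $(O_n)_n$ provided by Theorem \ref{thm:prelim.logic}(1), transporting the above composition through $\mathcal{D}_n$ exhibits $h_n^{-1} \restriction_{\Upsilon_n}$ as a definable sequence of functions, as required.

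The main obstacle is essentially bookkeeping: the clauses $b_n(0)=0$, $b_n(1)=1$, and the definability of $b_n^{-1}$ guaranteed by Theorem \ref{thm:prelim.logic}(2) are included precisely so that this recipe goes through, and once they are in hand the verification is mechanical. A secondary technical point is that since $d_n$ varies with $n$, the interpretation $\mathcal{G}_n$ must be framed so as to accommodate a variable ``matrix size''; this is standard for sequences of interpretations but needs to be handled consistently throughout.
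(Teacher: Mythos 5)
Your proposal follows the same overall route as the paper: compose the interpretation of $(O_n)_n$ in $(\Z)_n$ from Theorem \ref{thm:prelim.logic}(2) with a matrix interpretation of $\PSL_{d_n}(O_n)$ over $O_n$, and then check definability of $h_n^{-1}\restriction_{\Upsilon_n}$ by pinning down the entries of the representing matrix (diagonal $1$, the $(1,d_n)$-entry given through $b_n$, zeros elsewhere, using $b_n(0)=0$, $b_n(1)=1$ and the definability of $(b_n)_n$). That verification step matches the paper's.

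However, the step you call ``the evident interpretation of $\PSL_{d_n}(O_n)$ in $O_n$ by $d_n\times d_n$ matrices modulo the center'' cannot be taken at face value for a \emph{sequence} of interpretations in the sense of Definition \ref{def:logic}: there the arity $k$ and the defining formula must be fixed once for all $n$, whereas your representing tuples have length $d_n^2$ and the determinant-one condition is a polynomial identity whose shape depends on $d_n$. This variable-size problem, which you set aside as a ``secondary technical point,'' is in fact the main content of the paper's construction: one fixes it by G\"odel-encoding almost-identity $\N^+\times\N^+$ matrices as single integers via a bijection $\rho:\Z\to\mathfrak{M}$ with a definable entry-extraction function $r(z,i,j)$, cutting out $\tilde G_n$ by a single formula with the parameter $d_n$ (support condition plus a definable determinant of the relevant principal submatrix), and then either passing to imaginaries or, as the paper does, selecting the least code of each matrix class so that $g_n:G_n\to\Gamma_n$ is a bijection and the group law can be pulled back. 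Without spelling out this coding (done over $\Z$, with entries transported to $O_n$ by $b_n$), your proposal does not yet produce a uniformly definable $(\mathcal{G}_n)_n$, so there is a genuine gap exactly where the paper does its work; the intended fix is standard arithmetization, and once inserted your argument coincides with the paper's.
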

\begin{proof}
	Let $\kappa$ be the  Kronecker delta function and denote  $\N^+:=\N \setminus \{0\}$. Let $\mathfrak{M}$ be the set of functions $M:\N^+ \times \N^+ \rightarrow \Z$ such that $M-\kappa$ has a finite support. Godel encoding implies that there exist a bijection $\rho:\Z \rightarrow \mathfrak{M}$ and a definable function $r:\Z \times \N^+ \times \N^+ \rightarrow \Z$ such that for every $z \in \Z$ and every  $i,j \in \N^+$, $r(z,i,j)=\rho(z)(i,j)$.
	
	Theorem \ref{thm:prelim.logic}  implies that there exists an interpretation $((B_n,\oplus_n,\otimes_n,b_n))_n$ of $(O_n)_{n}$ in $(\Z_n)_{n}$ such that, for every $n$, $B_n=\Z$, $b_n: \mathbb{Z} \rightarrow O_n$ is a bijection,  $b_n(0)=0$,  $b_n(1)=1$ and  $(b_n)_n$ is a definable sequence of functions in $(O_n)_n$.

	 For every $M \in \mathfrak{M}$ and every $n$, we denote by $M_n$ the function $M_n:\N^+ \times \N^+ \rightarrow O_n$ defined by $M_n(i,j)=b_n(M(i,j))$. We can view $M_n$ as a matrix over $O_n$  with infinitely many rows and columns  whose $(i,j)$ coordinate is equal to $M_n(i,j)$. There is an obvious definition of $\det M_n$ (i.e the determinant of every large enough finite upper left principle submatrix of $M_n$). 
	 
	 For every $n$, denote
	$$
	\tilde{G}_n=\{\rho^{-1}(M) \mid \left((\forall i,j \in \N^+)\max(i,j)> d_n\rightarrow  M(i,j)-\kappa(i,j)=0\right)\ \wedge\ \det M_n=1\}.
	$$
$(\tilde{G}_n)_n$ is a definable sequence of subsets of $(\Z)_n$. For every $n$, define a function $\tilde{g}_n:\tilde{G}_n \rightarrow \Gamma_n$ in the following way: for every $z \in \tilde{G}_n$, if $M=\rho(z)$ then $\tilde{g}_n(z)$ is the image of $M_n\restriction_{[d_n+1]\times [d_n+1]}$ in $\Gamma_n$. Deonte
$$G_n=:\{z \in \tilde{G}_n \mid (\forall w \in \tilde{G}_n )\ \tilde{g}_n(z)=\tilde{g}_n(w) \rightarrow z \le w \} \text{ and } g_n:=\tilde{g}_n\restriction_{G_n}.$$ 
Then $(G_n)_{n}$ is a definable sequence of subsets of $(\Z)_{n}$ and, for every $n$, $g_n:G_n \rightarrow \Gamma_n$ is a bijection. Finally, define $\cdot_n:G_n \times G_n\rightarrow G_n$ by $z \cdot_n w:=g_n^{-1}(g_n(z)g_n(w))$. We get that $(\mathcal{G}_n)_n=(G_n,\cdot_n,G_n)$ is an interpretation of $(\Gamma)_n$ in $(\Z)_n$. 

Denote $(\mathcal{H}_n)_n=(H_n,\odot_n,h_n)_n:=(\mathcal{G}_n)_n \circ (\mathcal{F}_n)_n$. Then, for every $n$, $H_n=\{\epsilon_n(c) \mid c \in G_n\}$. For every $n$ and every $\epsilon_n(a) \in \Upsilon_n$,  $h_n^{-1}(\epsilon_n(a))=\epsilon_n(c) $ if and only if the following conditions hold: 
\begin{enumerate}
	\item For every $ i \in \N^+$, $r(c,i,i)=1$.
	\item $a=b_n\left(r(c,1,d_n)\right)$. 
	\item If $i \ne j$ and $(i,j)\ne (1,d_n)$ then $r(c,i,j)=0$.
\end{enumerate}
Since the above condtions can be expressed in first order, $(h_n^{-1} \restriction_{\Upsilon_n})_n$ is definable sequence of functions. 
\end{proof}

Fix $(\mathcal{G}_n)_n$ and $(\mathcal{H}_n)_n$ as in Claim \ref{claim:good_int}.
Choose, for any $n$, a regular semisimple element $\theta_n\in \Gamma_n$ such that $\langle \theta_n \rangle$ is Zariski dense in $\Theta_n:=\Cent_{\Gamma_n}(\theta_n)$. Denote $t_n=h_n(\theta_n)$ and $T_n=\Cent_{H_n}(t_n)$. It is clear that $(\theta_n)_n$ is a definable sequence of subsets of $(\Gamma_n)_n$, $(T_n)_n$ is a definable sequence of subsets of $(H_n)_n$, and $h_n(\Theta_n)=T_n$. Our next goal is two show that $(h_n \restriction_{\Theta_n})_n$ is definable. We will prove this in Claim \ref{claim:def.on.Theta} below. Before doing so, we need some preparation.

\begin{claim} \label{prop:cong.Gamma} Let $\mathfrak{C}_n$ be the collection of principal projective congruence subgroups of $\Gamma_n$ corresponding to maximal ideals. Then the sequence $(\mathfrak{C}_n)_n$ is uniformly definable.
\end{claim}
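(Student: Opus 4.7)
The plan is to exhibit a uniformly definable parameterization of principal projective congruence subgroups by tuples encoding generators of maximal ideals of $O_n$, and then to first-order characterize membership in each such subgroup via Lemma \ref{lemma:cong_def}.

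First I would set up the parameterization. By Claim \ref{claim:O_int} the ring $O_n$ is interpreted in $\Gamma_n$ via $\Upsilon_n$, so first-order conditions on elements of $O_n$ translate uniformly to first-order conditions on elements of $\Upsilon_n$. Since $O_n$ is a Dedekind domain, every ideal is generated by at most two elements; hence every maximal ideal $\mathfrak{q}\lhd O_n$ can be encoded by a pair $(u_1,u_2)\in \Upsilon_n\times \Upsilon_n$ for which $(d_n(u_1),d_n(u_2))$ generate an ideal whose quotient is a field. This condition is expressible within the interpretation and singles out a uniformly definable sequence $(Y_n)_n$ of parameter pairs.

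Given such parameters, the subsets $E^{d_n}_{1,d_n}(O_n;\mathfrak{q}) = \{\epsilon_n(a) \mid a\in \mathfrak{q}\}$ and $E^{d_n}_{1,d_n-1}(O_n;\mathfrak{q})$ are uniformly definable: the first because $\mathfrak{q}$ is a definable subset of $O_n$ inside the interpretation, the second because $E^{d_n}_{1,d_n-1}$ is a conjugate of $E^{d_n}_{1,d_n}=\Upsilon_n$ by a definable element of $\Gamma_n$, chosen analogously to $\sigma_n$ in Claim \ref{claim:O_int} to realize the permutation of the last two standard basis vectors. Likewise $E^{d_n}_{1,d_n}(O_n)$ and $E^{d_n}_{1,d_n-1}(O_n)$ are uniformly definable.

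The core step is the first-order equivalence
\[ \gamma \in \PSL_{d_n}(O_n;\mathfrak{q}) \iff \left(\gcl(\gamma)_{\Gamma_n}\right)^{32} \cap E^{d_n}_{1,d_n}(O_n) E^{d_n}_{1,d_n-1}(O_n) \subseteq E^{d_n}_{1,d_n}(O_n;\mathfrak{q}) E^{d_n}_{1,d_n-1}(O_n;\mathfrak{q}), \]
whose right-hand side is manifestly first-order in the parameters. The forward implication uses normality of $\PSL_{d_n}(O_n;\mathfrak{q})$ together with the observation that the image of $e^{d_n}_{1,d_n}(a) e^{d_n}_{1,d_n-1}(b)$ in $\PSL_{d_n}(O_n/\mathfrak{q})$ is the class of a nontrivial upper triangular unipotent whenever $(a,b)\notin \mathfrak{q}\times \mathfrak{q}$, hence non-central in $\SL_{d_n}(O_n/\mathfrak{q})$ and nontrivial. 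The backward implication is precisely Lemma \ref{lemma:cong_def}.

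The principal obstacle is bookkeeping: making every definition genuinely uniform in $n$ (so that $d_n$ never enters as an external parameter of the formula) requires that all conditions reduce, via the interpretation $\mathcal{D}_n$, to ring-theoretic first-order statements about $O_n$ together with the fixed combinatorics of the root subgroups of $\PSL_{d_n}$ — conjugation by the definable Weyl-type elements from Claim \ref{claim:O_int} being the main mechanism for accessing distinct root subgroups uniformly.
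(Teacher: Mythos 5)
Your proposal is correct and follows essentially the same route as the paper: uniform definability of maximal ideals via two-generation in Dedekind domains and the interpretation of $O_n$ from Claim \ref{claim:O_int}, definability of the products $E^{d_n}_{1,d_n}(O_n;\mathfrak{q})E^{d_n}_{1,d_n-1}(O_n;\mathfrak{q})$, and the first-order characterization of $\PSL_{d_n}(O_n;\mathfrak{q})$ via Lemma \ref{lemma:cong_def}. Your explicit verification of the easy forward implication (normality plus the observation that a unipotent $e^{d_n}_{1,d_n}(a)e^{d_n}_{1,d_n-1}(b)$ is trivial in $\PSL_{d_n}(O_n/\mathfrak{q})$ only when $a,b\in\mathfrak{q}$) is a detail the paper leaves implicit, but it is the same argument.
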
 
\begin{proof} 
Since every ideal in a Dedekind domain is generated by two elements, $(\{\mathfrak{q} \lhd O\mid \mathfrak{q}\text{ is maximal}\})_n$ is uniformly definable in $(O_n)_n$. Claim \ref{claim:O_int} implies that  $(\{E^{d_n}_{1,d_n-1}(O;\mathfrak{q})E^{d_n}_{1,d_n}(O;\mathfrak{q}) \mid \mathfrak{q}\lhd A\text{ is maximal} \})_n$ is uniformly definable in $(\Gamma_n)_n$.  Lemma \ref{lemma:cong_def} implies that, for every $n$ and every maximal ideal $\mathfrak{q}\lhd O_n$,  $\alpha \in \PSL_{d_n}(O_n;\mathfrak{q})$ if and only if $\gcl_{\PSL_{d_n}(O_n)}(\alpha)^{32} \cap E^{d_n}_{1,d_n-1}(O_n)E^{d_n}_{1,d_n}(O_n) \subseteq E^{d_n}_{1,d_n-1}(O_n;\mathfrak{q})E^{d_n}_{1,d_n}(O_n;\mathfrak{q}) $. The later condition can be expressed in first order.  \end{proof}

By Claim \ref{prop:cong.Gamma}, there is a definable sequence of sets, $(\Pi_n)_n$ (parameterizing primes) and a definable sequence $(\Sigma_n)_n$ with $\Sigma_n \subseteq \Pi_n \times \Gamma_n$, such that the collection $\left\{ \pr_1 ( \pr_2 ^{-1}(\pi)) \mid \pi \in \Pi_n \right\}=\{\PSL_{d_n}(O_n;\mathfrak{p}) \mid \mathfrak{p} \lhd O_n\text{ is maximal}\}$, where $\pr_i$ are the coordinates projections. Let $(P_n)_n$ be and $(S_n)_n$ be the corresponding sequences to $(\Pi_n)_n$ and $(\Sigma_n)_n$ in $(H_n)_n$. To ease notations, we will write $\Gamma_n[\pi]$ for the congruence subgroup $\pr_1 ( \pr_2 ^{-1}(\pi))$ and $H_n[p]$ for the congruence subgroup $\pr_1( \pr_2 ^{-1} (p))$. We say that $\pi \in \Pi_n$ and $p\in P_n$ are compatible if $h_n^{-1}(\Gamma_n[\pi])=H_n[p]$ (informally, they define the same congruence subgroup).

\begin{claim} \label{lem:primes.compatible} The sequence $\left( \left\{ (\pi,p)\in \Pi_n \times P_n \mid \text{$\pi$ and $p$ are compatible}  \right\} \right)_n$ is definable.
\end{claim}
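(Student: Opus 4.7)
The plan is to leverage Claim \ref{claim:good_int}, which asserts that $h_n^{-1}$ is definable when restricted to $\Upsilon_n$. Since this is essentially the only portion of the isomorphism $h_n:H_n\to\Gamma_n$ that is accessible definably, the test for compatibility must be phrased purely in terms of what happens on $\Upsilon_n$.

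The essential observation is that a principal projective congruence subgroup $\Gamma_n[\pi]=\PSL_{d_n}(O_n;\mathfrak{p})$ attached to a maximal ideal $\mathfrak{p}\lhd O_n$ is already determined by its intersection with $\Upsilon_n$. Indeed, the map $a\mapsto\epsilon_n(a)$ defines a bijection $O_n\to\Upsilon_n$ (elementary matrices with distinct off-diagonal entries are never scalar, so no identifications occur upon passing to $\PSL$), and under this identification
\[
\Upsilon_n\cap\Gamma_n[\pi] \;=\; E^{d_n}_{1,d_n}(O_n;\mathfrak{p}) \;=\; \{\epsilon_n(a)\mid a\in\mathfrak{p}\}
\]
corresponds to the ideal $\mathfrak{p}$ itself. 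In particular, two principal projective congruence subgroups associated to different maximal ideals have different intersections with $\Upsilon_n$.

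Now, since $h_n$ is an isomorphism of $L$-structures and $(P_n)_n,(S_n)_n$ are obtained from $(\Pi_n)_n,(\Sigma_n)_n$ by the same formulas via the interpretation $\mathcal{H}_n$, the subgroup $h_n(H_n[p])$ is itself a principal projective congruence subgroup of $\Gamma_n$. By the previous paragraph, $\Gamma_n[\pi]=h_n(H_n[p])$ if and only if the two subgroups agree on their intersection with $\Upsilon_n$. Hence $\pi$ and $p$ are compatible if and only if
\[
(\forall\upsilon\in\Upsilon_n)\bigl[\upsilon\in\Gamma_n[\pi]\;\leftrightarrow\; h_n^{-1}(\upsilon)\in H_n[p]\bigr].
\]
Every ingredient here is uniformly definable: $\Upsilon_n$ is definable in $\Gamma_n$, the fibers $\Gamma_n[\pi]$ and $H_n[p]$ come from the definable sequences $\Sigma_n$ and $S_n$, and $h_n^{-1}\restriction_{\Upsilon_n}$ is a definable sequence of functions by Claim \ref{claim:good_int}. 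I do not foresee a serious obstacle; the only minor subtlety is the passage to the projective quotient, which is harmless because the bijection $O_n\cong\Upsilon_n$ survives it.
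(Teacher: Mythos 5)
Your proof is correct and follows essentially the same route as the paper: the paper also reduces compatibility to the condition $h_n(\Gamma_n[\pi]\cap\Upsilon_n)=H_n[p]\cap U_n$ and then invokes the definability of $(h_n^{-1}\restriction_{\Upsilon_n})_n$ from Claim \ref{claim:good_int}. You merely spell out the justification (that a principal projective congruence subgroup is determined by its intersection with $\Upsilon_n$) that the paper leaves implicit.
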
 

\begin{proof} The condition that $\pi$ and $p$ are compatible is equivalent to $h_n(\pr_1(\pr_2 ^{-1} (\pi)) \cap \Upsilon_n)=\pr_1(\pr_2 ^{-1} (p)) \cap U_n$. The claim now follows because $(h_n^{-1} \restriction_{\Upsilon_n})_n$ is definable.
\end{proof}


\begin{claim} \label{claim:def.on.Theta} The sequence of functions $(h_n^{-1} \restriction_{\Theta_n})_n$ is definable. 
\end{claim}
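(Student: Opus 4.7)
The plan is to define the graph of $h_n^{-1}\restriction_{\Theta_n}$ by transporting Definition \ref{def:Ad.relation} across $h_n$ and then pinning down the preimage via Lemma \ref{lem:good.elements}(2). First, I introduce a cross-relation $\approx_n \subseteq \Gamma_n \times H_n$: declare $\gamma \approx_n x$ to hold if for every compatible pair $(\pi,p) \in \Pi_n \times P_n$ and every $a \in O_n$, the statement $(\exists y \in \Gamma_n)\ [y^{-1}\gamma y, \epsilon_n(1)] \equiv \epsilon_n(a) \pmod{\Gamma_n[\pi]}$ is equivalent to $(\exists z \in H_n)\ [z^{-1} \odot_n x \odot_n z,\ h_n^{-1}(\epsilon_n(1))] \equiv h_n^{-1}(\epsilon_n(a)) \pmod{H_n[p]}$. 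This relation is definable because $\odot_n$ and both families of congruence subgroups are definable (Claim \ref{prop:cong.Gamma}), their compatibility is definable (Claim \ref{lem:primes.compatible}), and $h_n^{-1}\restriction_{\Upsilon_n}$ is definable (Claim \ref{claim:good_int}). Since $h_n$ is a group isomorphism matching $\Gamma_n[\pi]$ with $H_n[p]$ for compatible primes, $\gamma \approx_n x$ is equivalent to $h_n^{-1}(\gamma)$ and $x$ being $\sim_H$-equivalent, where $\sim_H$ denotes the $h_n$-pullback to $H_n$ of the relation $\sim$ from Definition \ref{def:Ad.relation}.

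Next, I fix as parameters a good element $\theta^*_n \in \Theta_n$ and $t^*_n := h_n^{-1}(\theta^*_n) \in T_n$; existence of such $\theta^*_n$ in $\Theta_n$ follows from Lemma \ref{lem:good.elements}(1) together with the Zariski density of $\Theta_n$ in its ambient algebraic torus. For $\gamma$ in the definable subset $\Theta_n^{\theta^*_n\text{-good}}$, I characterize $h_n^{-1}(\gamma)$ as the unique $x \in T_n$ satisfying $\gamma \approx_n x$ and $\gamma \theta^*_n \approx_n x \odot_n t^*_n$. Existence holds for $x = h_n^{-1}(\gamma)$. For uniqueness, set $\gamma' := h_n(x) \in \Theta_n$; by the first paragraph, the two $\approx_n$ conditions translate into $\gamma \sim \gamma'$ and $\gamma \theta^*_n \sim \gamma' \theta^*_n$ in $\Gamma_n$, whence Lemma \ref{lem:good.elements}(2) forces $\gamma = \gamma'$, so $x = h_n^{-1}(\gamma)$.

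To extend from $\Theta_n^{\theta^*_n\text{-good}}$ to all of $\Theta_n$, I would factor each $\gamma \in \Theta_n$ as $\gamma = \gamma_1 \gamma_2$ with $\gamma_1 \in \Theta_n^{\theta^*_n\text{-good}} \cap \gamma \cdot (\Theta_n^{\theta^*_n\text{-good}})^{-1}$ and $\gamma_2 := \gamma_1^{-1} \gamma \in \Theta_n^{\theta^*_n\text{-good}}$ (such $\gamma_1$ exists in $\Theta_n$ because both sets are Zariski open dense in the algebraic torus and $\Theta_n$ is itself Zariski dense there), and define $h_n^{-1}(\gamma) := h_n^{-1}(\gamma_1) \odot_n h_n^{-1}(\gamma_2)$; the result is independent of the factorization since on the good locus $h_n^{-1}$ is already a group-theoretic function. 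The main obstacle is checking that the geometric conditions ``$t \in T^{vr}$'', ``$t$ is good'', and ``$t_1 \in T^{t\text{-good}}$'' are first-order definable inside $\Gamma_n$; fortunately they all unfold into first-order combinations of the definable relation $\sim$, the group law, and equality, so the bookkeeping does go through, but this is the step most prone to subtlety.
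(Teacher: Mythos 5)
Your core mechanism is the same as the paper's: transport the spectral relation of Definition \ref{def:Ad.relation} across $h_n$ using compatible pairs of congruence subgroups (Claims \ref{prop:cong.Gamma} and \ref{lem:primes.compatible}) together with the definability of $h_n^{-1}\restriction_{\Upsilon_n}$, fix a good element of $\Theta_n$ as a parameter, pin down the image of a point of the good locus by the two conditions $\gamma\approx_n x$ and $\gamma\theta^*_n\approx_n x\odot_n t^*_n$ and the uniqueness supplied by Lemma \ref{lem:good.elements}(2), and then extend to all of $\Theta_n$ by writing every element as a product of two elements of the Zariski open dense good locus. All of that matches the paper's argument, including the product trick at the end.

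The gap is in the step you flag and then wave through: the definability of $\Theta_n^{\theta^*_n\text{-good}}$, which requires first of all the definability of the very regular locus $\Theta_n^{vr}$, since $R$ is defined inside $(T^{vr})^2$. Your assertion that conditions such as ``$t\in T^{vr}$'' ``unfold into first-order combinations of the definable relation $\sim$, the group law, and equality'' is not correct: very regularity is the condition $|\delta^2\spec(\theta)|=f(d_n)$ on the complex spectrum, and neither the complex eigenvalues nor the count $f(d_n)$ (which varies with $n$) are directly accessible from $\sim$ and the group operations. The paper devotes the first part of its proof to exactly this point: it introduces the sets $\Sigma_{\theta,\mathfrak{p}}\subseteq O_n/\mathfrak{p}$ of eigenvalues of $\Ad(\theta)$ detected by commutators modulo $\mathfrak{p}$, invokes Chebotarev's density theorem to produce primes for which $|\Delta_{\theta,\mathfrak{p}}|=|\delta^2\spec(\theta)|$, and then uses the bi-interpretation of $(O_n)_n$ with $(\Z)_n$ to express, uniformly in $n$, the counting condition $|\Delta_{\theta,\mathfrak{p}}|=f(d_n)$; very regularity is then the existence of such a prime. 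Without this (or some substitute), the definable set $\Theta_n^{\theta^*_n\text{-good}}$ on which your characterization of the graph lives is simply not available. A smaller remark: since definable sequences allow parameters, you never need ``$t$ is good'' itself to be first-order; choosing $\theta^*_n$ as a parameter, as you and the paper both do, suffices, and only $T^{vr}$ and $T^{\theta^*_n\text{-good}}$ need defining.
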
 

\begin{proof} Let $\delta$ and $f$ be as in Definition \ref{def:delta.f}. We first claim that the sequence $(\Theta_n^{vr})_n$ is definable. Indeed, for $\theta \in \Theta_n$ and a prime $\mathfrak{p} \triangleleft O_n$, let
\[
\Sigma_{\theta,\mathfrak{p}}=\left\{ a \in O_n/\mathfrak{p} \mid (\exists x \in \Gamma_n) [x ^{-1} \theta x,\epsilon_n(1)]=\epsilon_n(a-1) \text{ (mod $\mathfrak{p}$)} \right\},
\]
and let $\Delta_{\theta,\mathfrak{p}}=\delta \Sigma_{\theta,\mathfrak{p}}$. For any $\theta$ and $\mathfrak{p}$, the set $\Sigma_{\theta,\mathfrak{p}}$ is a subset of the set of eigenvalues of $\Ad(\theta)$ modulo $\mathfrak{p}$, and $| \Delta_{\theta,\mathfrak{p}}| \leq |\delta ^2 \spec(\theta)|$. By Chebotarev's density theorem, for every $\theta$, there are infinitely many primes $\mathfrak{p}$ such that $\Sigma_{\theta,\mathfrak{p}}$ is a equal to the set of eigenvalues of $\Ad(\theta)$ modulo $\mathfrak{p}$ and $| \Delta_{\theta,\mathfrak{p}} | = |\delta ^2 \spec(\theta)|$. Since $(\mathbb{Z})_n$ is bi-interpretable with $(O_n)_n$ and since
\[
\Theta_n^{vr}=\left\{ \theta \in \Theta_n \mid \text{ there exists  a prime $\mathfrak{p}$ such that  $| \Delta_{\theta,\mathfrak{p}} |=f(d_n)$} \right\}, 
\]
we get that $(\Theta_n^{vr})_n$ is definable.


For every $n$, let $\sigma_n \in \Theta_n$ be a good element provided by Lemma \ref{lem:good.elements}, and let $s_n=h_n(\sigma_n)\in T_n$. By definition, the sequences $\left( \Theta_n^{\sigma_n\text{-good}} \right)_n$ and $\left( T_n^{s_n\text{-good}} \right)_n$ are definable. Let $A_n$ be the set of pairs $(\gamma,g)\in \Theta_n^{\sigma_n\text{-good}} \times T_n^{s_n\text{-good}}$ such that, for any compatible $(\pi,p)\in \Pi_n \times P_n$ and every $\upsilon\in \Upsilon_n$,
\[
 (\exists x\in \Gamma_n) [x ^{-1} \gamma x,\epsilon_n] \equiv \upsilon \text{ (mod $\Gamma_n[\pi]$)} \leftrightarrow (\exists y\in H_n) [y ^{-1} g y,e_{n}] \equiv h(\upsilon) \text{ (mod $H_n[p]$)}
\]
and
\[
 (\exists x\in \Gamma_n) [x ^{-1} \gamma \sigma_n x,\epsilon_n] \equiv \upsilon \text{ (mod $\Gamma_n[\pi]$)} \leftrightarrow (\exists y\in H_n) [y ^{-1} g s_n y,e_{n}] \equiv h(\upsilon) \text{ (mod $H_n[p]$)}.
\]
By Lemma \ref{lem:primes.compatible}, the sequence $(A_n)_n$ is definable and, by Lemma \ref{lem:good.elements}, $A_n$ is the graph of the restriction of $h_n$ to $\Theta_n^{\sigma_n\text{-good}}$.

Finally, since $\Theta_n^{\sigma_n\text{-good}}$ are Zariski open and dense, $\left( \Theta_n^{\sigma_n\text{-good}} \right) ^2=\Theta_n$, and the sequence $\left( h_n\restriction_{\Theta_n} \right)_n$ is definable.
\end{proof} 

\begin{claim} The sequence $(h_n)_n$ is definable.
\end{claim}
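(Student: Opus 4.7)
The plan is to extend the definability of $h_n^{-1}$ from the torus $\Theta_n$, established in Claim~\ref{claim:def.on.Theta}, to all of $\Gamma_n$, using the spanning property from Lemma~\ref{lem:torus.span}. First, I would select a definable sequence of pairs $(\alpha_{1,n}, \alpha_{2,n}) \in \Gamma_n \times \Gamma_n$ such that the product set
\[
Z_n := \Theta_n \cdot \alpha_{1,n} \Theta_n \alpha_{1,n}^{-1} \cdot \alpha_{2,n} \Theta_n \alpha_{2,n}^{-1}
\]
spans $\Mat_{d_n}(\C)$, and is therefore Zariski dense in $\Gamma_n$ when regarded inside $\PSL_{d_n}(\C)$. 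Such $\alpha_{i,n}$ exist by Lemma~\ref{lem:torus.span}, and since the spanning condition is Zariski open, $\Gamma_n$ contains suitable representatives; the choice can be made definably.

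I would then characterize the graph of $h_n^{-1}$ by the following existentially quantified condition: $(\gamma, g) \in \Gamma_n \times H_n$ satisfies $h_n(g) = \gamma$ if and only if there exist $a_1, a_2 \in H_n$ such that for every $(\theta_1, \theta_2, \theta_3) \in \Theta_n^3$, upon setting $\tau := \theta_1 \alpha_{1,n} \theta_2 \alpha_{1,n}^{-1} \alpha_{2,n} \theta_3 \alpha_{2,n}^{-1} \in \Gamma_n$ and $t := h_n^{-1}(\theta_1) a_1 h_n^{-1}(\theta_2) a_1^{-1} a_2 h_n^{-1}(\theta_3) a_2^{-1} \in H_n$, the pair $(\gamma \tau, g t)$ satisfies the spectrum-equivalence relation across $\Gamma_n$ and $H_n$ encoded by commutators with $\epsilon_n$ and $e_n$ modulo compatible primes, in the style of Definition~\ref{def:Ad.relation}. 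This condition is first-order expressible by combining Claim~\ref{claim:good_int} (for $h_n^{-1}|_{\Upsilon_n}$), Claim~\ref{claim:def.on.Theta} (for $h_n^{-1}|_{\Theta_n}$), and Claim~\ref{lem:primes.compatible}.

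The forward direction is immediate, by taking $a_i := h_n^{-1}(\alpha_{i,n})$, which makes the two expressions correspond under $h_n$. The backward direction is the heart of the argument. Setting $\beta_i := h_n(a_i)$ and $\gamma' := h_n(g)$, I would first exploit the structural observation that since $\Theta_n$ is abelian, the product $\tau$ is unchanged by the replacement $\alpha_{i,n} \rightsquigarrow \alpha_{i,n} s_i$ with $s_i \in \Theta_n$; this absorbs the indeterminacy of $\beta_i$ modulo the coset $\alpha_{i,n} \Theta_n$ into the parameters $(\theta_2, \theta_3)$. After this reduction, the hypothesis becomes $\spec \Ad(\gamma \tau) = \spec \Ad(\gamma' \tau)$ for all $\tau \in Z_n$, and the Zariski density of $Z_n$ combined with Lemma~\ref{lem:t12x}, applied to $\Ad(\gamma)$ and $\Ad(\gamma')$ with probe $\Ad(\tau)$, forces $\Ad(\gamma) = \Ad(\gamma')$ and hence $\gamma = \gamma'$ in the centerless group $\PSL_{d_n}(O_n)$. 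The main obstacle I anticipate is the rigidity step needed to exclude candidates $\beta_i$ lying outside the coset $\alpha_{i,n} \Theta_n$; this should follow from the genericity of the $\alpha_{i,n}$ together with the irreducibility of the adjoint representation, but care will be required to carry out the reduction definably.
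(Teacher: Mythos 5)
Your overall strategy is the same as the paper's (extend definability from $\Theta_n$ to all of $\Gamma_n$ using the three-torus product from Lemma \ref{lem:torus.span} and a congruence/spectral matching relation), but two of your key steps do not work as stated. First, spanning $\Mat_{d_n}(\C)$ linearly does \emph{not} make $Z_n$ Zariski dense in the group: a product of three maximal tori has dimension at most $3(d_n-1)<d_n^2-1$ for $d_n\geq 3$, so $Z_n$ is never Zariski dense in $\PSL_{d_n}(\C)$, and the appeal to Lemma \ref{lem:t12x} (which requires a dense probe set) collapses. The spanning property can only be exploited through a functional that is \emph{linear} in the probe. This is exactly how the paper uses it: its relation $R_n$ demands that every conjugate of $\gamma$ lying in $\Theta_n\Gamma_n[\pi]$ match, under $h_n$ and modulo a compatible prime, a conjugate of $g$; by Chebotarev this forces $\trace(h_n(\gamma\lambda))=\trace(g\,h_n(\lambda))$, and then nondegeneracy of the trace form together with the fact that the lift of $\Theta_n\alpha_n^{-1}\Theta_n\alpha_n\beta_n^{-1}\Theta_n\beta_n$ spans $\Mat_{d_n}$ gives $h_n(\gamma)=g$. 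Your relation, modelled on Definition \ref{def:Ad.relation}, records only the spectrum of $\Ad$, i.e.\ the multiset of eigenvalue ratios; for a general element $\gamma\tau$ (which need not be very regular) this does not determine the spectrum even up to a scalar (Lemma \ref{lem:delta.square} yields that only in the extremal case), so you cannot extract from it the trace identity that the spanning argument needs.

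Second, the existential quantification over $a_1,a_2\in H_n$ creates the rigidity problem you acknowledge but do not solve: your coset-absorption trick only handles witnesses in $h_n^{-1}(\alpha_{i,n}\Theta_n)$, and nothing in the proposal excludes spurious witnesses outside these cosets, so the backward implication is unproved. The paper sidesteps this entirely: since Definition \ref{def:logic} allows parameter sequences, it fixes $\alpha_n,\beta_n$ and their $h_n$-preimages $a_n,b_n$ as parameters, whence $h_n$ restricted to $\Theta_n\alpha_n^{-1}\Theta_n\alpha_n\beta_n^{-1}\Theta_n\beta_n$ is definable by Claim \ref{claim:def.on.Theta}, and the graph of $h_n$ is cut out by the purely universal condition that $(\gamma\lambda,g\,h_n(\lambda))\in R_n$ for all $\lambda$ in that product set --- no existential conjugators, hence no rigidity step at all. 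To repair your argument you would need to (i) replace the $\Ad$-spectrum relation by the paper's stronger congruence-matching relation (or otherwise obtain trace equality), and (ii) either prove the rigidity claim or eliminate the existential quantifiers by using parameters as the paper does.
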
 

\begin{proof} For every $n$, let $R_n \subset \Gamma_n \times H_n$ be the collection of pairs $(\gamma,g)$ such that, for every compatible pair $(\pi,p)$, the following holds:
\[
\left( \left( \forall \xi \right)\left( \xi ^{-1} \gamma \xi \in \Theta_n \Gamma_n[\pi] \right)  \rightarrow \left( \left( \exists x \right) h_n( \xi ^{-1} \gamma \xi \Gamma[\pi])= x^{-1} g x H[p] \right) \right).
\]
By Claim \ref{claim:def.on.Theta}, the sequence $(R_n)_n$ is definable. Clearly, the graph of $h_n$ is contained in $R_n$. If $(\gamma,g)\in R_n$, then, by Chebotarev's density theorem, 
$\trace(h_n(\gamma))=\trace(g)$.

Applying Lemma \ref{lem:torus.span} to $\Gamma_n$ and $\Theta_n$, we get elements $\alpha_n,\beta_n\in \Gamma_n$ such that  the lift of $\Theta_n \alpha_n^{-1} \Theta_n \alpha_n \beta_n ^{-1} \Theta_n \beta_n$ to $\SL_{d_n}(O_n)$ spans $M_{d_n}(\C)$. Denote $h_n^{-1}(\alpha_n)= a_n$ and $h_n ^{-1}(\beta_n) =b_n$. By Claim \ref{claim:def.on.Theta}, the sequence of functions $(h_n\restriction_{\Theta_n \alpha_n ^{-1} \Theta_n \alpha_n \beta_n^{-1} \Theta_n \beta_n})_n$ is definable. Since  the lift of $\Theta_n \alpha_n^{-1} \Theta_n \alpha_n \beta_n ^{-1} \Theta_n \beta_n$ to $\SL_{d_n}(O_n)$ spans $M_{d_n}(\C)$ and, we have that $h_n(\gamma)=g$ if and only if
\[
\left( \forall \lambda \in \Theta_n \alpha_n ^{-1} \Theta_n \alpha_n \beta_n^{-1} \Theta_n \beta_n \right) (\gamma \lambda, g h_n(\lambda))\in R_n.
\]
Since the last sequence of relations is definable, the claim follows.
\end{proof} 
The proof of Theorem \ref{thm:main.bi-int} is now complete. 
\end{proof}


\begin{thebibliography}{WWW}

\bibitem[AM]{AM} N.~Avni and C.~Meiri {\em On the model theory of higher rank arithmetic groups}, arXiv:2008.01793.

\bibitem[Khe]{Kh} A.~Khelif, {\em Bi-interprétabilit\'{e} et structures QFA: \'{e}tude de groupes r\'{e}solubles et des anneaux commutatifs}, C. R. Math. Acad. Sci. Paris 345 (2007), no. 2, 59--61.

\bibitem[Nie]{Nie} A.~Nies, {\em Describing groups.} Bull. Symbolic Logic 13 (2007), no. 3, 305--339.

\end{thebibliography}
\end{document}